\newtheorem{theorem}{Theorem}[section]
\newtheorem{proposition}[theorem]{Proposition}
\newtheorem{lemma}[theorem]{Lemma}
\newtheorem{fact}[theorem]{Fact}
\newtheorem{statement}[theorem]{Statement}
\newtheorem{cor}[theorem]{Corollary}
\newtheorem{definition}[theorem]{Definition}
\theoremstyle{plain}
\numberwithin{equation}{theorem}
\theoremstyle{remark}
\newtheorem{remark}[theorem]{Remark}
\newcommand{\C}{{\mathbb C}}
\newcommand{\Kbar}{\overline{K}}
\newcommand{\tensor}{\otimes}
\DeclareMathOperator{\Frac}{Frac}
\DeclareMathOperator{\sep}{sep}
\DeclareMathOperator{\tor}{tor}
\DeclareMathOperator{\CC}{\mathbb{C}_{\infty}}
\newcommand{\bP}{{\mathbb P}}
\newcommand{\bG}{{\mathbb G}}
\newcommand{\bC}{{\mathbb C}}
\newcommand{\bF}{{\mathbb F}}
\newcommand{\Fq}{\bF_q}
\newcommand{\lra}{\longrightarrow}
\newcommand{\cL}{\mathcal{L}}
\newcommand{\cT}{\mathcal{T}}
\newcommand{\cF}{\mathcal{F}}
\newcommand{\bK}{\overline{K}}
\newcommand{\hhat}{{\widehat h}}
\newcommand{\Drin}{{\bf \phi}}
\title{Siegel's theorem for Drinfeld modules}
\author{D.~Ghioca and T.~J.~Tucker}
\keywords{Drinfeld module, Heights, Diophantine approximation}
\subjclass[2000]{Primary 11G50, Secondary 11J68, 37F10}
\address{
Dragos Ghioca \\
Department of Mathematics\\
McMaster University \\
1280 Main Street West \\ 
Hamilton, Ontario \\
Canada  L8S 4K1 \\
}
\email{dghioca@math.mcmaster.ca}
\address{
Thomas Tucker\\
Department of Mathematics\\
Hylan Building\\
University of Rochester\\
Rochester, NY 14627 \\
}
\email{ttucker@math.rochester.edu}
\begin{document}

\begin{abstract}
  We prove a Siegel type statement for finitely generated $\phi$-submodules of $\mathbb{G}_a$ under the action of a Drinfeld module $\phi$. This provides a positive answer to a question we asked in a previous paper. We also prove an analog for Drinfeld modules of a theorem of Silverman for nonconstant rational maps of $\bP^1$ over a number field.
\end{abstract}

\maketitle

\section{Introduction}\label{intro} 
\footnotetext[1]{dghioca@math.mcmaster.ca; ttucker@math.rochester.edu}

In 1929, Siegel (\cite{Siegel}) proved that if $C$ is an irreducible
affine curve defined over a number field $K$ and $C$ has at least
three points at infinity, then there are at most finitely many
$K$-rational points on $C$ that have integral coordinates. The proof
of this famous theorem uses diophantine approximation along with the
fact that certain groups of rational points are finitely generated;
when $C$ has genus greater than 0, the group in question is the
Mordell-Weil group of the Jacobian of $C$, while when $C$ has genus 0,
the group in question is the group of $S$-units in a finite extension
of $K$.

Motivated by the analogy between rank $2$ Drinfeld modules and
elliptic curves, the authors conjectured in \cite{findrin} a Siegel
type statement for finitely generated $\phi$-submodules $\Gamma$ of
$\bG_a$ (where $\phi$ is a Drinfeld module of arbitrary rank). For a
finite set of places $S$ of a function field $K$, we defined a notion
of $S$-integrality and asked whether or not it is possible that there
are infinitely many $\gamma\in\Gamma$ which are $S$-integral with
respect to a fixed point $\alpha\in\bK$. We also proved in
\cite{findrin} a first instance of our conjecture in the case where
$\Gamma$ is a cyclic submodule and $\alpha$ is a torsion point for
$\phi$. Our goal in this paper is to prove our Siegel conjecture for
every finitely generated $\phi$-submodule of $\mathbb{G}_a(K)$, where
$\phi$ is a Drinfeld module defined over the field $K$ (see our
Theorem~\ref{Siegel}). We will also establish an analog (also in the
context of Drinfeld modules) of a theorem of Silverman for nonconstant
morphisms of $\bP^1$ of degree greater than $1$ over a number field
(see our Theorem~\ref{Silverman}).

We note that recently there has been significant progress on
establishing additional links between classical diophantine results
over number fields and similar statements for Drinfeld modules. Denis
\cite{Denis-conjectures} formulated analogs for Drinfeld modules of
the Manin-Mumford and the Mordell-Lang conjectures.  The
Denis-Manin-Mumford conjecture was proved by Scanlon in
\cite{Scanlon}, while a first instance of the Denis-Mordell-Lang
conjecture was established in \cite{IMRN} by the first author (see
also \cite{full-ml-drinfeld} for an extension of the result from
\cite{IMRN}). The authors proved in \cite{dynml} several other cases
of the Denis-Mordell-Lang conjecture. In addition, the first author
proved in \cite{Mat.Ann} an equidistribution statement for torsion
points of a Drinfeld module that is similar to the equidistribution
statement established by Szpiro-Ullmo-Zhang \cite{suz} (which was
later extended by Zhang \cite{Zhang} to a full proof of the famous
Bogomolov conjecture). Breuer \cite{Breuer} proved a special case of
the Andr\'{e}-Oort conjecture for Drinfeld modules, while special
cases of this conjecture in the classical case of a number field were
proved by Edixhoven-Yafaev \cite{Edixhoven} and Yafaev \cite{Yafaev}.
Bosser \cite{Bosser} proved a lower bound for linear forms in
logarithms at an infinite place associated to a Drinfeld module
(similar to the classical result obtained by Baker \cite{baker} for
usual logarithms, or by David \cite{david} for elliptic logarithms).
Bosser's result was used by the authors in \cite{findrin} to establish
certain equidistribution and integrality statements for Drinfeld
modules.  Moreover, Bosser's result is believed to be true also for
linear forms in logarithms at finite places for a Drinfeld module (as
was communicated to us by Bosser). Assuming this last statement, we
prove in this paper the natural analog of Siegel's theorem for
finitely generated $\phi$-submodules. We believe that our present
paper provides additional evidence that the Drinfeld modules represent
a good arithmetic analog in characteristic $p$ for abelian varieties
in characteristic $0$.

The basic outline of this paper can be summarized quite briefly. In
Section~\ref{notation} we give the basic definitions and notation, and
then state our main results. In Section~\ref{proof} we prove these
main results: Theorems~\ref{Siegel} and \ref{Silverman}.

\section{Notation}\label{notation}

{\bf Notation.} $\mathbb{N}$ stands for the non-negative integers: $\{0,1,\dots\}$, while $\mathbb{N}^{*}:=\mathbb{N}\setminus\{0\}$ stands for the positive integers.

\subsection{Drinfeld modules}
We begin by defining a Drinfeld module.  Let $p$ be a prime and let
$q$ be a power of $p$. Let $A:=\mathbb{F}_q[t]$, let $K$ be a finite field extension of $\mathbb{F}_q(t)$, and let $\Kbar$ be an
algebraic closure of $K$. We let $\tau$ be the Frobenius on
$\mathbb{F}_q$, and we extend its action on $\Kbar$.  Let $K\{\tau\}$
be the ring of polynomials in $\tau$ with coefficients from $K$ (the
addition is the usual addition, while the multiplication is the
composition of functions).

A Drinfeld module is a morphism $\Drin:A\rightarrow K\{\tau\}$ for
which the coefficient of $\tau^0$ in $\Drin(a)=:\Drin_a$ is $a$ for
every $a\in A$, and there exists $a\in A$ such that $\Drin_a\ne
a\tau^0$. The definition given here represents what Goss \cite{Goss}
calls a Drinfeld module of ``generic characteristic''.

We note that usually, in the definition of a Drinfeld module, $A$ is
the ring of functions defined on a projective nonsingular curve $C$,
regular away from a closed point $\eta\in C$. For our definition of a
Drinfeld module, $C=\mathbb{P}^1_{\mathbb{F}_q}$ and $\eta$ is the
usual point at infinity on $\mathbb{P}^1$. On the other hand, every
ring of regular functions $A$ as above contains $\mathbb{F}_q[t]$ as a
subring, where $t$ is a nonconstant function in $A$.
 
For every field extension $K\subset L$, the Drinfeld module $\Drin$
induces an action on $\mathbb{G}_a(L)$ by $a*x:=\Drin_a(x)$, for each
$a\in A$. We call \emph{$\phi$-submodules} subgroups of
$\mathbb{G}_a(\overline{K})$ which are invariant under the action of
$\phi$. We define the \emph{rank} of a $\phi$-submodule $\Gamma$ be 
$$\dim_{\Frac(A)}\Gamma\tensor_A\Frac(A).$$
As shown in \cite{Poonen}, $\mathbb{G}_a(K)$
is a direct sum of a finite torsion $\phi$-submodule with a free
$\phi$-submodule of rank $\aleph_0$.

A point $\alpha$ is \emph{torsion} for the Drinfeld module action if
and only if there exists $Q\in A\setminus\{0\}$ such that
$\Drin_Q(\alpha)=0$. The monic polynomial $Q$ of minimal degree which
satisfies $\phi_Q(\alpha)=0$ is called the \emph{order} of $\alpha$.
Since each polynomial $\phi_Q$ is separable, the torsion submodule
$\phi_{\tor}$ lies in the separable closure $K^{\sep}$ of $K$.

\subsection{Valuations and Weil heights}
Let $M_{\mathbb{F}_q(t)}$ be the set of places on $\Fq(t)$.
We denote by $v_{\infty}$ the place in $M_{\Fq(t)}$ such that $v_{\infty}(\frac{f}{g})=\deg(g)-\deg(f)$ for
every nonzero $f,g\in A=\Fq[t]$. We let $M_K$ be the set of valuations on $K$. Then $M_K$ is a set of valuations which satisfies a
product formula (see \cite[Chapter 2]{Serre-Mordell_Weil}). Thus
\begin{itemize}
\item for each nonzero $x\in K$, there are finitely many $v\in M_K$
such that $|x|_v\ne 1$; and \\
\item for each nonzero $x\in K$, we have $\prod_{v\in M_K} |x|_v=1$.
\end{itemize}
We may use these valuations to define a
Weil height for each $x \in K$ as
\begin{equation}\label{weil}
h(x) = \sum_{v \in M_K} \max \log (|x|_v,1).
\end{equation}

{\bf Convention.} Without loss of generality we may assume that the normalization for all the valuations of $K$ is made so that for each $v\in M_K$, we have $\log |x|_v\in\mathbb{Z}$.

\begin{definition}
Each place in $M_K$ which lies over $v_{\infty}$ is called an infinite place. Each place in $M_K$ which does not lie over $v_{\infty}$ is called a finite place.
\end{definition}

\subsection{Canonical heights}
Let $\Drin:A\rightarrow K\{\tau\}$ be a Drinfeld module of \emph{rank}
$d$ (i.e. the degree of $\phi_t$ as a polynomial in $\tau$ equals
$d$). The canonical height of $\beta\in K$ relative to $\Drin$ (see \cite{Denis}) is
defined as
$$\hhat(\beta) = \lim_{n \to \infty}
\frac{h(\Drin_{t^n}(\beta))}{q^{nd}}.$$ 
Denis \cite{Denis} showed that a point is torsion if and only if its canonical height equals $0$. 

For every $v\in M_K$, we let the local canonical height of $\beta\in K$ at $v$ be
\begin{equation}\label{poon-def}
\hhat_v(\beta) = \lim_{n \to \infty} \frac{\log
  \max(|\Drin_{t^n}(\beta)|_v, 1)}{q^{nd}}.
\end{equation}
Furthermore, for every $a\in \Fq[t]$, we have $\hhat_v(\phi_a(x))=\deg(\phi_a)\cdot\hhat_v(x)$ (see \cite{Poonen}).
It is clear that $\hhat_v$ satisfies the triangle inequality, and also that $\sum_{v\in M_K} \hhat_v(\beta) = \hhat(\beta)$.

\subsection{Completions and filled Julia sets}

By abuse of notation, we let $\infty\in M_K$ denote any place extending the
place $v_{\infty}$.  We let $K_{\infty}$ be
the completion of $K$ with respect to $|\cdot|_{\infty}$. We let
$\overline{K_{\infty}}$ be an algebraic closure of $K_{\infty}$. We
let $\CC$ be the completion of $\overline{K_{\infty}}$. Then $\CC$ is
a complete, algebraically closed field. Note that $\CC$ depends on our
choice for $\infty\in M_K$ extending $v_{\infty}$. However, each time
we will work with only one such place $\infty$, and so, there will be
no possibility of confusion.

Next, we define the \emph{$v$-adic filled Julia set} $J_{\phi,v}$
corresponding to the Drinfeld module $\phi$ and to each place $v$ of
$M_K$.  Let $\mathbb{C}_v$ be the completion of an algebraic closure
of $K_v$.  Then $| \cdot |_v$ extends to a unique absolute value on
all of $\mathbb{C}_v$.  The set $J_{\phi,v}$ consists of all
$x\in\mathbb{C}_v$ for which $\{|\phi_Q(x)|_v\}_{Q\in A}$ is bounded.
It is immediate to see that $x\in J_{\phi,v}$ if and only if
$\{|\phi_{t^n}(x)|_v\}_{n\ge 1}$ is bounded.

One final note on absolute values: as noted above, the place $v\in M_K$ extends to a unique absolute value $|\cdot |_v$ on all of $\mathbb{C}_v$.  We fix
an embedding of $i: \bK  \lra \mathbb{C}_v$.  For $x \in\bK$, we denote
$| i(x) |_v$ simply as $|x|_v$, by abuse of notation.  
 
\subsection{The coefficients of $\Drin_t$}
Each Drinfeld module is isomorphic to a Drinfeld module for which all
the coefficients of $\Drin_t$ are integral at all the places in $M_K$
which do not lie over $v_{\infty}$. Indeed, we let $B\in
\Fq[t]$ be a product of all (the finitely many)
irreducible polynomials $P\in\Fq[t]$ with the property
that there exists a place $v\in M_K$ which lies over the place $(P)\in M_{\Fq(t)}$, and there exists a coefficient of $\phi_t$
which is not integral at $v$. Let $\gamma$ be a sufficiently large
power of $B$. Then $\psi:A\rightarrow K\{\tau\}$ defined by
$\psi_Q:=\gamma^{-1}\Drin_Q\gamma$ (for each $Q\in A$) is a Drinfeld
module isomorphic to $\Drin$, and all the coefficients of $\psi_t$ are
integral away from the places lying above $v_{\infty}$. Hence, from
now on, we assume that all the coefficients of $\Drin_t$ are integral
away from the places lying over $v_{\infty}$. It follows that for
every $Q\in A$, all coefficients of $\phi_Q$ are integral away from
the places lying over $v_{\infty}$.

\subsection{Integrality and reduction}
\begin{definition}
\label{S-integral definition}
For a finite set of places $S \subset M_K$ and $\alpha\in\bK$, we say
that $\beta \in\bK$ is $S$-integral with respect to $\alpha$ if for
every place $v\notin S$, and for every morphisms $\sigma,\tau:\bK\rightarrow\bK$ (which restrict to the identity on $K$) the following
are true:
\begin{itemize}
\item if $|\alpha^{\tau}|_v\le 1$, then $|\alpha^{\tau}-\beta^{\sigma}|_v\ge 1$.
\item if $|\alpha^{\tau}|_v >1$, then $|\beta^{\sigma}|_v\le 1$.
\end{itemize}
\end{definition}
We note that if $\beta$ is $S$-integral with respect to $\alpha$, then it is also $S'$-integral with respect to $\alpha$, where $S'$ is a finite set of places containing $S$. Moreover, the fact that $\beta$ is $S$-integral with respect to $\alpha$, is preserved if we replace $K$ by a finite extension. Therefore, in our results we will always assume $\alpha,\beta\in K$. 
For more details about the definition of $S$-integrality, we refer the reader to \cite{BIR}.

\begin{definition}
  The Drinfeld module $\phi$ has good reduction at a place $v$ if for
  each nonzero $a\in A$, all coefficients of $\phi_a$ are $v$-adic
  integers and the leading coefficient of $\phi_a$ is a $v$-adic unit.
  If $\phi$ does not have good reduction at $v$, then we say that
  $\phi$ has bad reduction at $v$.
\end{definition}

It is immediate to see that $\phi$ has good reduction at $v$ if and only if all coefficients of $\phi_t$ are $v$-adic integers, while the leading coefficient of $\phi_t$ is a $v$-adic unit.

We can now state our Siegel type result for Drinfeld modules.
\begin{theorem}
\label{Siegel}
With the above notation, assume in addition $K$ has only one infinite place. Let $\Gamma$ be a finitely generated $\phi$-submodule of $\mathbb{G}_a(K)$, let $\alpha\in K$, and let $S$ be a finite set of places in $M_K$. Then there are finitely many $\gamma\in\Gamma$ such that $\gamma$ is $S$-integral with respect to $\alpha$.
\end{theorem}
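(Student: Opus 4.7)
The plan is to argue by contradiction, using Bosser's lower bound for linear forms in Drinfeld logarithms (\cite{Bosser}) at the infinite place, together with its conjectural finite-place analog (assumed in the introduction) at the finite places of $S$, as the diophantine input. Suppose there is an infinite sequence of pairwise distinct $\gamma_n\in\Gamma$ that are $S$-integral with respect to $\alpha$. Since $S$-integrality only weakens when $S$ grows, I would first enlarge $S$ to contain the unique infinite place, every place of bad reduction for $\phi$, and every place at which $\alpha$ is not integral. The integrality condition then simplifies to $|\alpha-\gamma_n|_v\ge 1$ for $v\notin S$, and writing the Weil height in the product-formula form $h(x)=\sum_v\max(-\log|x|_v,0)$ gives
\[
h(\alpha-\gamma_n)\;=\;\sum_{v\in S}\max\bigl(-\log|\alpha-\gamma_n|_v,\,0\bigr).
\]
Combined with the triangle inequality for $h$ and $h(\gamma_n)=\hhat(\gamma_n)+O(1)$, this yields $\sum_{v\in S}\max(-\log|\alpha-\gamma_n|_v,0)\ge\hhat(\gamma_n)-O(1)$, so the target is to bound each local contribution by $O(\log\hhat(\gamma_n))$.

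To set up the diophantine step I would use the finite generation hypothesis: since $A=\Fq[t]$ is a PID, $\Gamma/\phi_{\tor}$ is free; fix $A$-generators $\beta_1,\dots,\beta_r$, and after extracting a subsequence write $\gamma_n=\phi_{a_1^{(n)}}(\beta_1)+\cdots+\phi_{a_r^{(n)}}(\beta_r)+\tau$ for a fixed torsion element $\tau$. Because $\hhat(\phi_a x)=q^{d\deg a}\hhat(x)$ and $\hhat$ is positive on $(\Gamma/\phi_{\tor})\otimes\Frac(A)$ (this is where the single-infinite-place hypothesis enters, ensuring that $\hhat$ gives a genuine norm on the Mordell--Weil lattice), one checks $\hhat(\gamma_n)\asymp q^{dB_n}$ with $B_n:=\max_i\deg a_i^{(n)}\to\infty$. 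For each $v\in S$ I would apply a sufficiently high iterate $\phi_{t^{N_v}}$ to $\alpha$ and $\gamma_n$ so that their images lie in the open disc about $0$ on which the $v$-adic Drinfeld logarithm $\log_{\phi,v}$ converges and acts as an isometry. Using $\log_{\phi,v}\circ\phi_a=a\cdot\log_{\phi,v}$ in that disc, the difference $\log_{\phi,v}(\alpha)-\log_{\phi,v}(\gamma_n)$ becomes an $A$-linear form in the logarithms of the fixed elements $\alpha,\beta_1,\dots,\beta_r,\tau$, with coefficients $a_i^{(n)}$.

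Applying Bosser's theorem at the infinite place and its finite-place analog at each finite $v\in S$, I obtain
\[
\bigl|\log_{\phi,v}(\alpha)-\log_{\phi,v}(\gamma_n)\bigr|_v\;\ge\;C_vB_n^{-\kappa_v}
\]
for constants $C_v,\kappa_v>0$. The isometry property of $\log_{\phi,v}$ near $0$ converts this into $-\log|\alpha-\gamma_n|_v\le\kappa_v\log B_n+O(1)$, and summing over the finite set $S$ gives $\sum_{v\in S}\max(-\log|\alpha-\gamma_n|_v,0)=O(\log B_n)=O(\log\hhat(\gamma_n))$, which contradicts the linear lower bound $\hhat(\gamma_n)-O(1)$ for $n$ large enough.

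The main obstacle I foresee is the linearisation step at each $v\in S$: moving $\alpha$ and $\gamma_n$ into the domain of convergence of $\log_{\phi,v}$ requires applying a $v$-dependent iterate of $\phi_t$, and one must check that this preprocessing only shifts $B_n$ by a bounded additive constant and preserves the $A$-linear structure demanded by Bosser's hypotheses. At finite $v$ this is especially subtle, since it requires $\alpha$ and sufficiently large iterates of $\gamma_n$ to lie in the filled Julia set $J_{\phi,v}$ and relies on the finite-place bound being available in precisely the required form; the torsion contribution $\log_{\phi,v}(\tau)$ takes only finitely many values along the sequence and can thus be treated as a fixed parameter of the linear form.
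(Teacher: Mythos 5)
Your overall strategy (heights versus Bosser-type lower bounds for linear forms, summed over an enlarged $S$) is the right shape, but the proposal has a genuine gap at the point where you write ``one checks $\hhat(\gamma_n)\asymp q^{dB_n}$''. This lower bound is not a routine check: it is essentially the entire technical content of the paper's proof, which is carried out in Proposition~\ref{sup} together with Lemmas~\ref{jumps at a finite place} and \ref{jumps at an infinite place}. The heuristic you offer --- that positivity of $\hhat$ on $(\Gamma/\phi_{\tor})\otimes\Frac(A)$ makes it a ``genuine norm on the Mordell--Weil lattice'' --- does not apply here. For a Drinfeld module, $\hhat$ is not a quadratic form; it scales as $\hhat(\phi_a x)=q^{d\deg a}\hhat(x)$, so there is no inner-product structure, no discriminant, and no analogue of the Néron--Tate positive definiteness argument. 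Positivity on nontorsion points only gives $\hhat(\gamma_n)>0$, and Northcott gives $\hhat(\gamma_n)\to\infty$, but neither gives control of the rate in terms of $B_n=\max_i\deg a_i^{(n)}$: at a given place $v$, the quantity $\hhat_v\bigl(\sum_i\phi_{a_i}(\beta_i)\bigr)$ can be much smaller than $\max_i q^{d\deg a_i}\hhat_v(\beta_i)$ due to $v$-adic cancellation, and nothing elementary rules out such cancellations happening simultaneously at every place as $n$ varies. The paper's Proposition~\ref{sup} is precisely the mechanism that shows some place escapes this cancellation; its proof is an involved descent on the sum of local canonical heights, which terminates because of Corollary~\ref{uniform-D} (bounded denominators for $\hhat_v$ at finite places) and Lemma~\ref{jumps at an infinite place} (bounded denominator at $\infty$ for points whose finite local heights all vanish). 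The one-infinite-place hypothesis enters only through the latter lemma --- not through any ``norm on the Mordell--Weil lattice'' property.

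A secondary issue is that your stated form of the diophantine input is too strong: Bosser's theorem at the infinite place (Fact~\ref{Bosser}) yields
$-\log\bigl|\textstyle\sum_i\phi_{a_i^{(n)}}(\beta_i)-\alpha\bigr|_{\infty}\le O(B_n\log B_n)$,
and the conjectural finite-place analog (Statement~\ref{Bosser-wannabe}) yields a bound polynomial in $B_n$, not $O(\log B_n)$ as you wrote. This does not by itself sink the plan (an exponential lower bound on $\hhat(\gamma_n)$ would still beat a polynomial upper bound), but it does mean you cannot substitute the weaker, merely unbounded, Northcott bound for the missing exponential lower bound: the exponent really is needed. In short, your proposal and the paper differ at exactly the nontrivial step, and that step cannot be dispatched as a one-line consequence of positivity of the canonical height.
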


As mentioned in Section~\ref{intro}, we proved in \cite{findrin} that
Theorem~\ref{Siegel} holds when $\Gamma$ is a cyclic $\phi$-module
generated by a nontorsion point $\beta\in K$ and
$\alpha\in\phi_{\tor}(K)$ (see Theorem $1.1$ and Proposition $5.6$ of
\cite{findrin}). Moreover, in \cite{findrin} we did not have in our
results the extra hypothesis from Theorem~\ref{Siegel} that there
exists only one infinite place in $M_K$. Even though we believe
Theorem~\ref{Siegel} is true without this hypothesis, our method for
proving Theorem~\ref{Siegel} requires this technical hypothesis. On
the other hand, we are able to prove the following analog for Drinfeld
modules of a theorem of Silverman (see \cite{SilSiegel}) for
nonconstant morphisms of $\bP^1$ of degree greater than $1$ over a
number field, without the hypothesis of having only one infinite place
in $M_K$.

\begin{theorem}
\label{Silverman}
With the above notation, let $\beta\in K$ be a nontorsion point, and let $\alpha\in K$ be an arbitrary point. Then there are finitely many $Q\in A$ such that $\phi_Q(\beta)$ is $S$-integral for $\alpha$.
\end{theorem}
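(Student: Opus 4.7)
The plan is to mirror Silverman's argument from the number-field setting, with Bosser's lower bound for linear forms in $\phi$-logarithms (together with its finite-place extension hypothesized in the Introduction) taking the place of Baker's theorem on usual logarithms. Suppose, for contradiction, that there is an infinite sequence $\{Q_n\}_{n\ge 1}\subset A$ with each $\gamma_n:=\phi_{Q_n}(\beta)$ being $S$-integral for $\alpha$. Because $A$ has only finitely many polynomials of any bounded degree, we may pass to a subsequence on which $\deg Q_n\to\infty$. Since $\beta$ is nontorsion, any two distinct $Q_i,Q_j$ with $\phi_{Q_i}(\beta)=\phi_{Q_j}(\beta)$ would force $\phi_{Q_i-Q_j}(\beta)=0$, so we may also assume $\gamma_n\neq\alpha$. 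Finally, $\hhat(\gamma_n)=\deg(\phi_{Q_n})\,\hhat(\beta)=q^{d\deg Q_n}\,\hhat(\beta)$, which grows to infinity because $\hhat(\beta)>0$.

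The first step is to localize the hypothetical closeness of $\gamma_n$ to $\alpha$ at a single place $v_0\in S$. Since $\hhat - h$ is bounded and $h(\gamma_n)\le h(\gamma_n-\alpha)+h(\alpha)$, one has $h(\gamma_n-\alpha)\ge\hhat(\gamma_n)-O(1)$. The product formula, applied to $\gamma_n-\alpha\neq 0$, yields
$$h(\gamma_n-\alpha)=\sum_{v\in M_K}\max\bigl(0,\,-\log|\gamma_n-\alpha|_v\bigr).$$
Specializing the $S$-integrality hypothesis to $\sigma=\tau=\mathrm{id}$, one sees that $|\gamma_n-\alpha|_v\ge 1$ at every $v\notin S$: directly when $|\alpha|_v\le 1$, and via $|\gamma_n-\alpha|_v=|\alpha|_v>1$ (using $|\gamma_n|_v\le 1$) when $|\alpha|_v>1$. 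Hence only terms with $v\in S$ contribute, and
$$\sum_{v\in S}\max\bigl(0,\,-\log|\gamma_n-\alpha|_v\bigr)\ge q^{d\deg Q_n}\,\hhat(\beta)-O(1).$$
Pigeonholing on the finite set $S$ and passing to a further subsequence produces a single place $v_0\in S$ and a constant $c>0$ with
$$|\gamma_n-\alpha|_{v_0}\le\exp\bigl(-c\,q^{d\deg Q_n}\bigr)\qquad\text{for all }n\ge 1.$$

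The second step contradicts this estimate using Bosser's lower bound at $v_0$ for linear forms in $\phi$-logarithms. Choose $M\in A\setminus\{0\}$ so that $\phi_M(\alpha)$ and $\phi_M(\beta)$ both lie in the $v_0$-adic domain of convergence of $\log_\phi$ --- at a finite place of good reduction this is possible because the residue field at $v_0$ is finite, so a high enough power of any suitable element of $A$ drives $\alpha$ and $\beta$ into the formal neighborhood of $0$ --- and set $u:=\log_\phi(\phi_M(\alpha))$, $w:=\log_\phi(\phi_M(\beta))$. The $\Fq$-linearity of $\log_\phi$ and the functional equation $\log_\phi\circ\phi_Q=Q\log_\phi$ identify $\log_\phi\bigl(\phi_M(\gamma_n-\alpha)\bigr)$ with the linear form $\Lambda_n:=Q_n w-u$. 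This form is nonzero: $w\neq 0$ because $\beta$ is nontorsion, and $\gamma_n\neq\alpha$ prevents $Q_n w=u$. On the log-domain $|\phi_M(\gamma_n-\alpha)|_{v_0}$ and $|\Lambda_n|_{v_0}$ differ by a bounded factor, and $|\phi_M(\gamma_n-\alpha)|_{v_0}$ differs from $|\gamma_n-\alpha|_{v_0}$ by a bounded power of $|M|_{v_0}$. Bosser's theorem then yields
$$|\gamma_n-\alpha|_{v_0}\ge\exp\bigl(-C(\log\deg Q_n)^{\kappa}\bigr)$$
for positive constants $C,\kappa$ depending only on $\phi,\alpha,\beta,v_0$. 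Combined with the upper bound of the previous step, this forces
$$c\,q^{d\deg Q_n}\le C(\log\deg Q_n)^{\kappa},$$
which fails for $\deg Q_n$ large. Contradiction.

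The main obstacle is this last invocation of Bosser's theorem: one must (i) produce $M\in A$ bringing $\alpha$ and $\beta$ simultaneously into the $v_0$-adic log-domain --- easy at an infinite place, and at a finite good-reduction place following from finiteness of the residue field together with the convergence properties of $\log_\phi$ --- and (ii) verify that $\Lambda_n=Q_n w-u\neq 0$, where the nontorsion hypothesis on $\beta$ is used in an essential way. Apart from these technicalities, the structure of the argument --- growth of the canonical height, localization at a single place through the product formula, polynomial-versus-exponential contradiction --- is the direct translation of Silverman's original proof.
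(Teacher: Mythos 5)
Your proposal is correct in its essential structure and relies on the same key ingredient as the paper --- Bosser's lower bound for linear forms in $\phi$-logarithms (Fact~\ref{Bosser} at the infinite place, Statement~\ref{Bosser-wannabe} at finite places) --- but organizes the contradiction differently. The paper first proves Lemma~\ref{alt_loc_height}, giving $\hhat_v(\beta)=\lim_{\deg Q\to\infty}\frac{\log|\phi_Q(\beta)-\alpha|_v}{q^{d\deg Q}}$ for every $v$, and then forms the sum $\Sigma=\sum_v\lim_n\frac{\log|\phi_{Q_n}(\beta)-\alpha|_v}{q^{d\deg Q_n}}$. By the lemma $\Sigma=\hhat(\beta)>0$, while $S$-integrality makes the sum finite and the product formula forces $\Sigma=0$, a contradiction. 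You instead work with Weil heights: from $h(\gamma_n-\alpha)\ge\hhat(\gamma_n)-O(1)=q^{d\deg Q_n}\hhat(\beta)-O(1)$ and the fact that $S$-integrality confines the height to the places in $S$, a pigeonhole argument produces a single $v_0\in S$ at which $|\gamma_n-\alpha|_{v_0}$ shrinks exponentially in $q^{d\deg Q_n}$; you then contradict Bosser's bound at $v_0$. Both are faithful to Silverman's template: your version trades the paper's intermediate limit lemma for a pigeonhole step, and lands the contradiction at a single place rather than in an aggregated sum. Either route buys the same theorem at about the same cost.

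Two corrections are in order. First, you write Bosser's bound as $|\gamma_n-\alpha|_{v_0}\ge\exp(-C(\log\deg Q_n)^{\kappa})$; the correct form (per Fact~\ref{Bosser} and Statement~\ref{Bosser-wannabe}) is $\exp\bigl(-C\deg Q_n\log\deg Q_n\bigr)$ at the infinite place and $\exp\bigl(-C(\deg Q_n)^{\kappa}\bigr)$ at a finite place; you appear to have transported the classical Baker-type shape ``$(\log H)^{\kappa}$'' without noting that $\log H$ already corresponds to $\deg Q_n$ in the function-field setting. The final contradiction $c\,q^{d\deg Q_n}\le C\,(\text{poly in }\deg Q_n)$ survives, since exponential dominates polynomial, but the bound as written is wrong. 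Second, the step producing an $M\in A$ carrying $\alpha$ and $\beta$ simultaneously into the $v_0$-adic log-domain is more delicate than ``finiteness of the residue field'' suggests. One needs $\beta$ to have bounded forward orbit at $v_0$ (if $\beta\notin J_{\phi,v_0}$ then $|\gamma_n|_{v_0}\to\infty$ trivially contradicts $|\gamma_n-\alpha|_{v_0}\to 0$, so one must split off that case first, as Lemma~\ref{alt_loc_height} does), and at finite places of bad reduction the ``small enough ball'' argument runs through something like Fact~\ref{jumps-2} rather than bare residue-field finiteness. The cleaner route --- taken by the paper --- is to cite Fact~\ref{Bosser} and Statement~\ref{Bosser-wannabe} directly, which already package the reduction to the log-domain (including the lattice ambiguity at $\infty$), rather than re-deriving them on the fly.
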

As explained before, in \cite{findrin} we proved Theorem~\ref{Silverman} in the case $\alpha$ is a torsion point in $K$.

\section{Proofs of our main results}\label{proof}

We continue with the notation from Section~\ref{notation}.
In our argument, we will be using the following key fact.
\begin{fact}
\label{Bosser}
Assume $\infty\in M_K$ is an infinite place. Let $\gamma_1,\dots,\gamma_r,\alpha\in K$. Then there exist
(negative) constants $C_0$ and $C_1$ (depending only on $\phi$,
$\gamma_1,\dots,\gamma_r, \alpha$) such that for any polynomials $P_1, \dots, P_r
\in A$ (not all constants), either
$\phi_{P_1}(\gamma_1)+\dots+\phi_{P_r}(\gamma_r)=\alpha$ or
$$
\log | \phi_{P_1} (\gamma_1) + \dots + \phi_{P_r} (\gamma_r) -
\alpha|_{\infty} \geq C_0 + C_1 \max_{1\le i\le r}(\deg(P_i)\log \deg(P_i)).
  $$
\end{fact}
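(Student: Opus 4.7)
The plan is to reduce Fact~\ref{Bosser} to Bosser's lower bound for linear forms in Drinfeld logarithms at $\infty$ (the main theorem of \cite{Bosser}), using the Drinfeld exponential. Let $\exp_\phi \colon \CC \to \CC$ be the exponential attached to $\phi$ and $\infty$: an $\mathbb{F}_q$-linear, entire, surjective map whose kernel $\Lambda_\phi$ is a discrete rank-$d$ $A$-lattice in $\CC$ and which satisfies $\exp_\phi(az) = \phi_a(\exp_\phi(z))$ for every $a \in A$. Choose once and for all elements $u_0, u_1, \dots, u_r \in \CC$ with $\exp_\phi(u_0) = \alpha$ and $\exp_\phi(u_i) = \gamma_i$. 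Because $\exp_\phi$ is additive and intertwines multiplication by $P_i \in A$ with $\phi_{P_i}$, one obtains
$$\phi_{P_1}(\gamma_1) + \cdots + \phi_{P_r}(\gamma_r) - \alpha \;=\; \exp_\phi(\Lambda), \qquad \Lambda := P_1 u_1 + \cdots + P_r u_r - u_0.$$

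First I would reduce $\Lambda$ modulo $\Lambda_\phi$: writing $\Lambda^\ast := \Lambda - \omega$ for the unique representative lying in a fixed fundamental domain, one has $\omega = b_1 \omega_1 + \cdots + b_d \omega_d$ in terms of a fixed $A$-basis $\omega_1, \dots, \omega_d$ of $\Lambda_\phi$ and some $b_j \in A$. Since $|\Lambda^\ast|_\infty$ is bounded by a constant depending only on $\phi$, while $|\Lambda|_\infty \le q^{\max_i \deg(P_i)} \cdot \max_i |u_i|_\infty$, the correction satisfies $\max_j \deg(b_j) \le \max_i \deg(P_i) + O(1)$, with the $O(1)$ depending only on $\phi$ and the $u_i$. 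If the left side of the displayed equation is nonzero then $\Lambda^\ast \ne 0$.

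Then I would apply Bosser's theorem to the nonzero $A$-linear combination $\Lambda^\ast$ of the fixed logarithms $u_0, \dots, u_r$ and periods $\omega_1, \dots, \omega_d$: it yields negative constants $C_0, C_1$, depending only on $\phi$, $\alpha$, and $\gamma_1, \dots, \gamma_r$, such that
$$\log |\Lambda^\ast|_\infty \;\ge\; C_0 + C_1 \max_i \bigl(\deg(P_i)\log\deg(P_i)\bigr).$$
To close the argument, I would note that $\Lambda^\ast$ lies in a bounded region of $\CC$ on which $\exp_\phi(z) = z + O(z^q)$; for $|z|_\infty$ small enough the non-archimedean triangle inequality gives $|\exp_\phi(z)|_\infty = |z|_\infty$, and in any case $|\exp_\phi(\Lambda^\ast)|_\infty \ge c\,|\Lambda^\ast|_\infty$ for a constant $c > 0$ depending only on $\phi$. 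Since $\exp_\phi(\Lambda^\ast) = \exp_\phi(\Lambda)$ equals the target sum, combining the two estimates and absorbing $\log c$ into $C_0$ gives the stated inequality.

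The only deep input, and hence the main obstacle, is Bosser's bound itself --- the Drinfeld-module analogue of Baker's theorem on linear forms in logarithms. The reduction above is essentially formal, built on the transcendental properties of $\exp_\phi$ at the infinite place; it depends crucially on $\infty$ being the place at which $\exp_\phi$ is analytic, which is precisely why the analogous statement at finite places (needed elsewhere in the paper) must still be taken on faith.
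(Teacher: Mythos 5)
Your proposal follows the same route as the paper: pull back through the Drinfeld exponential, subtract an $A$-lattice element with controlled degree, and apply Bosser's lower bound for linear forms in Drinfeld logarithms to the resulting nonzero linear form. The choice of a successive-minima basis $\omega_1,\dots,\omega_d$ to bound $\max_j\deg b_j$ by $\max_i\deg P_i + O(1)$ is exactly the paper's device.

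There is one step that needs to be tightened. Your claim that ``in any case $|\exp_\phi(\Lambda^\ast)|_\infty \ge c\,|\Lambda^\ast|_\infty$ for a constant $c>0$ depending only on $\phi$'' is false as written: if the ``fixed fundamental domain'' is any bounded region large enough to contain a nonzero lattice point on (or near) its boundary, then $\Lambda^\ast$ can approach that lattice point, making $\exp_\phi(\Lambda^\ast)\to 0$ while $|\Lambda^\ast|_\infty$ stays bounded away from $0$, so no uniform $c$ can exist. Moreover, since $\mathbb{C}_\infty$ is infinite-dimensional over $K_\infty$ while the lattice has finite $A$-rank, the phrase ``unique representative in a fixed fundamental domain'' requires care. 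The clean way to run the reduction (and the one the paper uses) is a case split: let $B$ be a ball about the origin of radius $C_2$ on which $\exp_\infty$ restricts to a bijective isometry. If $\bigl|\sum_i\phi_{P_i}(\gamma_i)-\alpha\bigr|_\infty \ge C_2$, the constant $\log C_2$ already serves as the lower bound. Otherwise the target value has a unique preimage $y\in B$ under $\exp_\infty$, with $|y|_\infty$ \emph{equal} (not merely comparable) to the target absolute value, and $\Lambda-y\in\cL$, so $y=\sum_i P_iu_i-u_0-\sum_j Q_j\omega_j$ is exactly the nonzero linear form to which Bosser applies; the degree bound on the $Q_j$ then comes from the successive-minima property as you describe. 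Your ``$c>0$'' claim was an overclaim trying to avoid this case split; once you make the split and place $\Lambda^\ast$ inside $B$, the constant becomes $1$ and the argument goes through.
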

Fact~\ref{Bosser} follows easily from the lower bounds for linear
forms in logarithms established by Bosser (see Th\'{e}or\`{e}me $1.1$
in \cite{Bosser}). Essentially, it is the same proof as our proof of
Proposition $3.7$ of \cite{findrin} (see in particular the derivation
of the inequality $(3.7.2)$ in \cite{findrin}). For the sake of
completeness, we will provide below a sketch of a proof of
Fact~\ref{Bosser}.

\begin{proof}[Proof of Fact~\ref{Bosser}.]
We denote by $\exp_{\infty}$ the exponential map associated to the place $\infty$ (see \cite{Goss}). We also let $\cL$ be the corresponding lattice for $\exp_{\infty}$, i.e. $\cL:=\ker(\exp_{\infty})$. Finally, let $\omega_1,\dots,\omega_d$ be an $A$-basis for $\cL$ of ``successive minima'' (see Lemma $(4.2)$ of \cite{Taguchi}). This means that for every $Q_1,\dots,Q_d\in A$, we have
\begin{equation}
\label{successive minima}
|Q_1\omega_1+\dots+Q_d\omega_d|_{\infty}=\max_{i=1}^d |Q_i\omega_i|_{\infty}.
\end{equation}

Let $u_0\in\CC$ such that $\exp_{\infty}(u_0)=\alpha$. We also let $u_1,\dots,u_r\in\CC$ such that for each $i$, we have $\exp_{\infty}(u_i)=\gamma_i$. We will find constants $C_0$ and $C_1$ satisfying the inequality from Fact~\ref{Bosser}, which depend only on $\phi$ and $u_0,u_1,\dots,u_r$.

There exists a positive constant $C_2$ such that $\exp_{\infty}$ induces an isomorphism from the ball $B:=\{z\in \bC_{\infty}\text{ : }|z|_{\infty}<C_2\}$ to itself (see Lemma $3.6$ of \cite{findrin}). If we assume there exist no constants $C_0$ and $C_1$ as in the conclusion of Fact~\ref{Bosser}, then there exist polynomials $P_1,\dots,P_r$, not all constants, such that
\begin{equation}
\label{Bosser nonzero}
\sum_{i=1}^r \phi_{P_i}(\gamma_i)\ne \alpha
\end{equation}
and $|\sum_{i=1}^r\phi_{P_i}(\gamma_i)-\alpha|_{\infty}<C_2$. Thus we can find $y\in B$ such that $|y|_{\infty}=|\sum_{i=1}^r\phi_{P_i}(\gamma_i)-\alpha|_{\infty}$ and 
\begin{equation}
\label{ecuatie pentru y}
\exp_{\infty}(y)=\sum_{i=1}^r \phi_{P_i}(\gamma_i) - \alpha.
\end{equation}
Moreover, because $\exp_{\infty}$ is an isomorphism on the metric space $B$, then for every $y'\in\CC$ such that $\exp_{\infty}(y')=\sum_{i=1}^r \phi_{P_i}(\gamma_i) - \alpha$, we have $|y'|_{\infty}\ge |y|_{\infty}$. But we know that
\begin{equation}
\label{ecuatie pentru y din nou}
\exp_{\infty}\left(\sum_{i=1}^r P_iu_i -u_0\right)=\sum_{i=1}^r \phi_{P_i}(\gamma_i) - \alpha.
\end{equation}
Therefore $|\sum_{i=1}^r P_iu_i -u_0|_{\infty} \ge |y|_{\infty}$. On the other hand, using \eqref{ecuatie pentru y} and \eqref{ecuatie pentru y din nou}, we conclude that there exist polynomials $Q_1,\dots,Q_d$ such that
$$\sum_{i=1}^r P_iu_i - u_0 = y+\sum_{i=1}^d Q_i\omega_i.$$
Hence $|\sum_{i=1}^d Q_i\omega_i|_{\infty}\le |\sum_{i=1}^r P_iu_i-u_0|_{\infty}$. Using \eqref{successive minima}, we obtain
\begin{equation}
\label{small degrees for the Q's}
\begin{split}
 \left|\sum_{i=1}^d Q_i\omega_i\right|_{\infty} = \max_{i=1}^d
 |Q_i\omega_i|_{\infty} 
 & \le \left|\sum_{i=1}^r P_iu_i-u_0\right|_{\infty}\\
 & \le \max\left(|u_0|_{\infty},\max_{i=1}^r |P_iu_i|_{\infty}\right)\\
 & \le C_3\cdot\max_{i=1}^r |P_i|_{\infty},
 \end{split}
\end{equation}
where $C_3$ is a constant depending only on $u_0,u_1,\dots,u_r$. We take logarithms of both sides in \eqref{small degrees for the Q's} and obtain
\begin{equation}
\label{small degrees for the Q's again}
\begin{split}
\max_{i=1}^d \deg Q_i & \le \max_{i=1}^r \deg P_i +\log C_3 -\min_{i=1}^d \log |\omega_i|_{\infty}\\
       & \le \max_{i=1}^r \deg P_i +C_4,
\end{split}
\end{equation}
where $C_4$ depends only on $\phi$ and $u_0,u_1,\dots,u_r$ (the dependence on the $\omega_i$ is actually a dependence on $\phi$, because the $\omega_i$ are a fixed basis of ``successive minima'' for $\phi$ at $\infty$). Using \eqref{small degrees for the Q's again} and Proposition $3.2$ of \cite{findrin} (which is a translation of the bounds for linear forms in logarithms for Drinfeld modules established in \cite{Bosser}), we conclude that there exist (negative) constants $C_0$, $C_1$, $C_5$ and $C_6$ (depending only on $\phi$, $\gamma_1,\dots,\gamma_r$ and $\alpha$) such that
\begin{equation}
\label{eticheta}
\begin{split}
  \log \left|\sum_{i=1}^r \phi_{P_i}(\gamma_i)-\alpha\right|_{\infty}
  & = \log |y|_{\infty}\\
  & =\log \left|\sum_{i=1}^r P_iu_i-u_0-\sum_{i=1}^d Q_i\omega_i\right|_{\infty}\\
  & \ge C_5 + C_6\left(\max_{i=1}^r\deg P_i +C_4\right)\log\max_{i=1}^r\left(\deg P_i+C_4\right)\\
  & \ge C_0 + C_1\left(\max_{i=1}^r\deg P_i \right)\log\max_{i=1}^r\left(\deg
    P_i\right),
\end{split}
\end{equation}
as desired.
\end{proof}

In our proofs for Theorems~\ref{Silverman} and \ref{Siegel} we will
also use the following statement, which is believed to be true, based
on communication with V.~Bosser.  Therefore we assume its validity
without proof.
\begin{statement}
\label{Bosser-wannabe}
Assume $v$ does not lie above $v_{\infty}$. Let $\gamma_1,\dots,\gamma_r,\alpha\in K$. Then there exist positive
constants $C_1,C_2,C_3$ (depending only on $v$, $\phi$,
$\gamma_1,\dots,\gamma_r$ and $\alpha$) such that for any $P_1, \dots,
P_r \in \Fq[t]$, either
$\phi_{P_1}(\gamma_1)+\dots+\phi_{P_r}(\gamma_r)=\alpha$ or
$$
\log | \phi_{P_1} (\gamma_1) + \dots + \phi_{P_r} (\gamma_r) -
\alpha|_{v} \geq - C_1 - C_2 \max_{1\le i\le r}(\deg(P_i))^{C_3}.$$
\end{statement}

Statement~\ref{Bosser-wannabe} follows after one establishes a lower
bound for linear forms in logarithms at finite places $v$. In a
private communication, V. Bosser told us that it is clear to him that
his proof (\cite{Bosser}) can be adapted to work also at finite
places with minor modifications.

We sketch here how Statement~\ref{Bosser-wannabe} would follow from a
lower bound for linear forms in logarithms at finite places. Let $v$
be a finite place and let $\exp_v$ be the formal exponential map
associated to $v$. The existence of $\exp_v$ and its convergence on a
sufficiently small ball $B_v:=\{x\in\C_v\text{ : }|x|_v < C_v\}$ is
proved along the same lines as the existence and the convergence of
the usual exponential map at infinite places for $\phi$ (see Section
$4.6$ of \cite{Goss}). In addition,
\begin{equation}
\label{expo-finite}
|\exp_v(x)|_v=|x|_v
\end{equation}
for every $x\in B_v$.
Moreover, at the expense of replacing $C_v$ with a smaller positive constant, we may assume that for each $F\in
A$, and for each $x\in B_v$, we have (see Lemma $4.2$ in \cite{findrin})
\begin{equation}
\label{trecut}
|\phi_F(x)|_v=|Fx|_v.
\end{equation} 
Assume we know the existence of the
following lower bound for (nonzero) linear forms in logarithms at a
finite place $v$.
\begin{statement}
\label{logs}
Let $u_1,\dots,u_r\in B_v$ such that for each $i$,
$\exp_v(u_i)\in\bK$. Then there exist positive constants $C_4$, $C_5$,
and $C_6$ (depending on $u_1,\dots,u_r$) such that for every
$F_1,\dots,F_r\in A$, either $\sum_{i=1}^r F_iu_i=0$, or 
$$\log \left|\sum_{i=1}^r F_i u_i \right|_v\ge - C_4 -
C_5 \left( \max_{i=1}^r \deg F_i \right)^{C_6}.$$
\end{statement}
As mentioned before, Bosser proved Statement~\ref{logs} in the case
$v$ is an infinite place (in his result, $C_6=1+\epsilon$ and
$C_4=C_{\epsilon}$ for every $\epsilon>0$).

We will now derive Statement~\ref{Bosser-wannabe} assuming Statement~\ref{logs}
holds. 

\begin{proof} ({\it That Statement~\ref{logs} implies
    Statement~\ref{Bosser-wannabe}}.)  Clearly, it suffices to prove
    Statement~\ref{Bosser-wannabe} in the case $\alpha=0$. So, let
    $\gamma_1,\dots,\gamma_r\in K$, and assume by contradiction that
    there exists an infinite sequence
    $\{F_{n,i}\}_{\substack{n\in\mathbb{N}^{*}\\ 1\le i\le r}}$ such
    that for each $n$, we have
\begin{equation}
\label{999}
-\infty < \log\left|\sum_{i=1}^r \phi_{F_{n,i}}(\gamma_i)\right|_v<  \log C_v .
\end{equation}
For each $n\ge 1$, we let $\cF_n:=\left(F_{n,1},\dots,F_{n,r}\right)\in A^r$. We view $A^r$ as an $r$-dimensional $A$-lattice inside the $r$-dimensional $\Frac(A)$-vector space $\Frac(A)^r$. In addition, we may assume that for $n\ne m$, we have $\cF_n\ne\cF_m$.

Using basic linear algebra, because the sequence
$\{F_{n,i}\}_{\substack{n\in\mathbb{N}^{*}\\ 1\le i\le r}}$ is
infinite, we can find $n_0\ge 1$ such that for every $n>n_0$, there
exist $H_n,G_{n,1},\dots,G_{n,n_0}\in A$ (not all equal to $0$) such
that
\begin{equation}
\label{1000}
H_n\cdot\cF_n = \sum_{j=1}^{n_0} G_{n,j}\cdot\cF_j.
\end{equation}
Essentially, \eqref{1000} says that $\cF_1,\dots,\cF_{n_0}$ span the
linear subspace of $\Frac(A)^r$ generated by all $\cF_n$.  Moreover,
we can choose the $H_n$ in \eqref{1000} in such a way that $\deg H_n$
is bounded independently of $n$ (e.g. by a suitable determinant of
some linearly independent subset of the first $n_0$ of the $\cF_j$).
Furthemore, there exists a constant $C_7$ such that for all $n>n_0$,
we have
\begin{equation}
\label{1000.5}
\max_{j=1}^{n_0}\deg G_{n,j}< C_7 + \max_{i=1}^r\deg F_{n,i}.
\end{equation}
Because $\left|\sum_{i=1}^r \phi_{F_{n,i}}(\gamma_i)\right|_v <C_v$,
equation \eqref{trecut} yields
\begin{equation}
\label{1001}
\left|\phi_{H_n}\left(\sum_{i=1}^r \phi_{F_{n,i}}(\gamma_i)\right)\right|_v=|H_n|_v\cdot \left|\sum_{i=1}^r \phi_{F_{n,i}}(\gamma_i)\right|_v.
\end{equation}
Using \eqref{1000}, \eqref{1001}, and the fact that $|H_n|_v\le 1$, we obtain
\begin{equation}
\label{1002}
\begin{split}
\left|\sum_{i=1}^r \phi_{F_{n,i}}(\gamma_i)\right|_v
& \ge \left|\phi_{H_n}\left(\sum_{i=1}^r \phi_{F_{n,i}}(\gamma_i)\right)\right|_v\\
& = \left|\sum_{j=1}^{n_0}\phi_{G_{n,j}}\left(\sum_{i=1}^r \phi_{F_{j,i}}(\gamma_i)\right)\right|_v.
\end{split}
\end{equation}
Since $\left|\sum_{i=1}^r\phi_{F_{j,i}}(\gamma_i)\right|_v<C_v$ for
all $1\le j\le n_0$, there exist $u_1,\dots,u_{n_0}\in B_v$ such that
for every $1\le j\le n_0$, we have
$$\exp_v(u_j)=\sum_{i=1}^r \phi_{F_{j,i}}(\gamma_i).$$
Then
Statement~\ref{logs} implies that there exist constants $C_4,C_5,C_6,
C_8, C_9$ (depending on $u_1,\dots,u_{n_0}$), such that
\begin{equation}\label{1003}
\begin{split}
  \log\left|\sum_{j=1}^{n_0}\phi_{G_{n,j}} \left(
      \sum_{i=1}^r\phi_{F_{j,i}}(\gamma_i) \right)\right|_v 
  & = \log\left|\sum_{j=1}^{n_0}G_{n,j} u_j\right|_v\\
  & \ge - C_4 - C_5 \left(\max_{j=1}^{n_0}\deg G_{n,j}\right)^{C_6}\\
  & \ge - C_8 - C_9 \left(\max_{i=1}^r\deg F_{n,i}\right)^{C_6},
\end{split}
\end{equation}
where in the first equality we used \eqref{expo-finite}, while in the last inequality we used \eqref{1000.5}. Equations
\eqref{1002} and \eqref{1003} show that
Statement~\ref{Bosser-wannabe} follows from Statement~\ref{logs}, as
desired.  
\end{proof}

Next we prove Theorem~\ref{Silverman} which will be a \emph{warm-up}
for our proof of Theorem~\ref{Siegel}. For its proof, we will only
need the following weaker (but also still conjectural) form of
Statement~\ref{Bosser-wannabe} (i.e., we only need
Statement~\ref{logs} be true for non-homogeneous $1$-forms of
logarithms).

\begin{statement}
\label{weaker Bosser-wannabe}
Assume $v$ does not lie over $v_{\infty}$. Let $\gamma,\alpha\in K$. Then there exist positive constants $C_1$, $C_2$ and $C_3$ (depending only on $v$, $\phi$, $\gamma$ and $\alpha$) such that for each polynomial $P\in\Fq[t]$, either $\phi_P(\gamma)=\alpha$ or
$$\log \left| \phi_P(\gamma)-\alpha \right|_v \ge -C_1 -C_2 \deg(P)^{C_3}.$$
\end{statement}

\begin{proof}[Proof of Theorem~\ref{Silverman}.]
The following Lemma is the key to our proof.
\begin{lemma}
\label{alt_loc_height}
For each $v\in M_K$, we have $\hhat_v(\beta) = \lim_{\deg Q\to\infty} \frac{\log | \phi_Q(\beta) - \alpha |_v}{q^{d\deg Q}}$.
\end{lemma}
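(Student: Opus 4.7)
\textbf{Proof plan for Lemma~\ref{alt_loc_height}.} The strategy is to compare $|\phi_Q(\beta)-\alpha|_v$ with $\max(|\phi_Q(\beta)|_v,1)$ and then leverage the scaling law $\hhat_v(\phi_Q(\beta))=q^{d\deg Q}\hhat_v(\beta)$ together with a standard boundedness estimate between $\hhat_v(\cdot)$ and $\log\max(|\cdot|_v,1)$. More precisely, I would first prove that there is a constant $M=M(\phi,v)$ with $|\hhat_v(x)-\log\max(|x|_v,1)|\le M$ for every $x\in\C_v$. This follows from the usual telescoping argument: each increment
$$q^{-d(n+1)}\log\max(|\phi_{t^{n+1}}(x)|_v,1)-q^{-dn}\log\max(|\phi_{t^n}(x)|_v,1)$$
is uniformly bounded because $\phi_t(y)=ty+\cdots+c_d y^{q^d}$ satisfies $|\phi_t(y)|_v=|c_d|_v\,|y|_v^{q^d}$ as soon as $|y|_v$ is sufficiently large, while the relevant expression stays bounded when $|y|_v$ is small. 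Applying this with $x=\phi_Q(\beta)$ and invoking $\hhat_v(\phi_Q(\beta))=q^{d\deg Q}\hhat_v(\beta)$ already yields $q^{-d\deg Q}\log\max(|\phi_Q(\beta)|_v,1)\to\hhat_v(\beta)$, so it remains to verify $\log|\phi_Q(\beta)-\alpha|_v=\log\max(|\phi_Q(\beta)|_v,1)+o\bigl(q^{d\deg Q}\bigr)$.

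I would then split on whether $\beta$ lies in the filled Julia set $J_{\phi,v}$. In the case $\beta\notin J_{\phi,v}$, iterating $|\phi_t(y)|_v=|c_d|_v\,|y|_v^{q^d}$ in the large-$|y|_v$ regime shows that $|\phi_{t^n}(\beta)|_v\to\infty$ fast enough to dominate $|\phi_{t^i}(\beta)|_v$ for every $i<n$ once $n$ is large. Writing a generic $Q\in A$ of degree $n$ as $\sum_{i\le n}c_it^i$ with $c_n\in\F_q^{*}$ (so $|c_n|_v=1$), the non-archimedean triangle inequality then forces $|\phi_Q(\beta)|_v=|\phi_{t^n}(\beta)|_v$; hence $|\phi_Q(\beta)|_v\to\infty$ as $\deg Q\to\infty$, and for large $\deg Q$ one even has $|\phi_Q(\beta)-\alpha|_v=|\phi_Q(\beta)|_v=\max(|\phi_Q(\beta)|_v,1)$, which together with the preliminary step closes this case.

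In the opposite case $\beta\in J_{\phi,v}$, one has $\hhat_v(\beta)=0$ and $\{|\phi_Q(\beta)|_v\}_Q$ is bounded, so $|\phi_Q(\beta)-\alpha|_v$ is bounded above; this immediately gives $\limsup_{\deg Q\to\infty}q^{-d\deg Q}\log|\phi_Q(\beta)-\alpha|_v\le 0$. For the matching lower bound I would apply Fact~\ref{Bosser} when $v$ lies above $v_\infty$ and Statement~\ref{weaker Bosser-wannabe} when $v$ is a finite place, both with $r=1$ and $\gamma_1=\beta$: each yields $\log|\phi_Q(\beta)-\alpha|_v\ge -A-B(\deg Q)^{C}$ whenever $\phi_Q(\beta)\ne\alpha$, and the exceptional equality $\phi_Q(\beta)=\alpha$ can occur for at most one $Q$ since $\beta$ is non-torsion (otherwise a nonzero difference of two such $Q$'s would annihilate $\beta$). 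Dividing this polynomial-in-$\deg Q$ lower bound by the exponential $q^{d\deg Q}$ drives the quotient to $0=\hhat_v(\beta)$.

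The genuinely delicate point is the lower bound in the filled-Julia-set case at a finite place, which rests on Statement~\ref{weaker Bosser-wannabe}, the still-conjectural finite-place analog of Bosser's linear-forms-in-logarithms estimate; the archimedean counterpart is secured by Fact~\ref{Bosser}, and the complementary case $\beta\notin J_{\phi,v}$ is elementary once the $O(1)$ comparison between $\hhat_v$ and $\log\max(|\cdot|_v,1)$ is in hand.
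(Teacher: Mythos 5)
Your proposal is correct and follows essentially the same route as the paper: split on whether $\beta$ lies in $J_{\phi,v}$, handle the unbounded case by observing that the highest-degree term dominates and forces $|\phi_Q(\beta)-\alpha|_v=|\phi_Q(\beta)|_v=\max(|\phi_Q(\beta)|_v,1)$ for $\deg Q$ large, and settle the bounded case by pairing the trivial $\limsup\le 0$ with the Bosser lower bound (Fact~\ref{Bosser} at the infinite place, Statement~\ref{weaker Bosser-wannabe} at a finite place). The extra care you take — proving the $O(1)$ comparison between $\hhat_v$ and $\log\max(|\cdot|_v,1)$ via telescoping, and noting that $\phi_Q(\beta)=\alpha$ occurs for at most one $Q$ since $\beta$ is nontorsion — only makes explicit details the paper leaves implicit, so there is no substantive divergence.
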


\begin{proof}[Proof of Lemma~\ref{alt_loc_height}.]
Let $v\in M_K$. If $\beta\notin J_{\phi,v}$, then $|\phi_Q(\beta)|_v\to\infty$, as $\deg Q\to\infty$. Hence, if $\deg Q$ is sufficiently large, then $|\phi_Q(\beta)-\alpha|_v=|\phi_Q(\beta)|_v=\max\{ |\phi_Q(\beta)|_v,1\}$, which yields the conclusion of Lemma~\ref{alt_loc_height}.

Thus, from now on, we assume $\beta\in J_{\phi,v}$. Hence $\hhat_v(\beta)=0$, and we need to show that
\begin{equation}
\label{alt_loc_height_20}
\lim_{\deg Q\to\infty}\frac{\log |\phi_Q(\beta)-\alpha|_v}{q^{d\deg Q}}=0.
\end{equation}
Also note that since $\beta\in J_{\phi,v}$, then
$|\phi_Q(\beta)-\alpha|_v$ is bounded, and so, $\limsup_{\deg
  Q\to\infty}\frac{\log |\phi_Q(\beta)-\alpha|_v}{q^{d\deg Q}}\le 0$.
Thus, in order to prove \eqref{alt_loc_height_20}, it suffices to show
that
\begin{equation}
\label{alt_loc_height_2}
\liminf_{\deg Q\to\infty}\frac{\log |\phi_Q(\beta)-\alpha|_v}{q^{d\deg Q}}\ge 0.
\end{equation}

If v is an infinite place, then Fact~\ref{Bosser} implies that for
every polynomial $Q$ such that $\phi_Q(\beta)\ne\alpha$, we have $\log
| \phi_Q(\beta) - \alpha |_{\infty} \ge C_0 + C_1 \deg(Q)\log\deg(Q)$
(for some constants $C_0,C_1<0$).  Then taking the limit as $\deg
Q\to\infty$, we obtain \eqref{alt_loc_height_2}, as desired.

Similarly, if $v$ is a finite place, then \eqref{alt_loc_height_2} follows from Statement~\ref{weaker Bosser-wannabe}.
\end{proof}

Theorem~\ref{Silverman} follows easily using the result of Lemma~\ref{alt_loc_height}. We assume there exist infinitely many polynomials $Q_n$ such that $\phi_{Q_n}(\beta)$ is $S$-integral with respect to $\alpha$. We consider the sum
$$\Sigma:=\sum_{v\in M_K}\lim_{n\to\infty}\frac{\log |\phi_{Q_n}(\beta)-\alpha|_v}{q^{d\deg Q_n}}.$$
Using Lemma~\ref{alt_loc_height}, we obtain that $\Sigma=\hhat(\beta)>0$ (because $\beta\notin\phi_{\tor}$).

Let $\cT$ be a finite set of places consisting of all the places in $S$ along with all places $v\in M_K$ which satisfy at least one of the following conditions:
\begin{enumerate}
\item[1.] $|\beta|_v>1$.
\item[2.] $|\alpha|_v>1$.
\item[3.] $v$ is a place of bad reduction for $\phi$.
\end{enumerate}
Therefore by our choice for $\cT$ (see $1.$ and $3.$), for every $v\notin\cT$, we have $|\phi_{Q_n}(\beta)|_v\le 1$. Thus, using also $2.$, we have $|\phi_{Q_n}(\beta)-\alpha|_v\le 1$. On the other hand, $\phi_{Q_n}(\beta)$ is also $\cT$-integral with respect to $\alpha$. Hence, because of $2.$, then for all $v\notin\cT$, we have $|\phi_{Q_n}(\beta)-\alpha|_v\ge 1$. We conclude that for every $v\notin\cT$, and for every $n$, we have $|\phi_{Q_n}(\beta)-\alpha|_v=1$. This allows us to interchange the summation and the limit in the definition of $\Sigma$ (because then $\Sigma$ is a finite sum over all $v\in\cT$). We obtain
$$\Sigma=\lim_{n\to\infty}\frac{1}{q^{d\deg Q_n}}\sum_{v\in M_K}\log |\phi_{Q_n}(\beta)-\alpha|_v=0,$$
by the product formula applied to each $\phi_{Q_n}(\beta)-\alpha$. On the other hand, we already showed that $\Sigma=\hhat(\beta)>0$. This contradicts our assumption that there are infinitely many polynomials $Q$ such that $\phi_Q(\beta)$ is $S$-integral with respect to $\alpha$, and concludes the proof of Theorem~\ref{Silverman}.
\end{proof}

Before proceeding to the proof of Theorem~\ref{Siegel}, we prove several facts about local heights. In Lemma~\ref{jumps at an infinite place} we will use the technical assumption of having only one infinite place in $K$.

From now on, let $\phi_t=\sum_{i=0}^d a_i\tau^i$. As explained in Section~\ref{notation}, we may assume each $a_i$ is integral away from $v_{\infty}$. Also, from now on, we work under the assumption that there exists a \emph{unique} place $\infty\in M_K$ lying above $v_{\infty}$.

\begin{fact}
\label{jumps}
For every place $v$ of $K$, there exists $M_v>0$ such that for each $x\in K$, if $|x|_v>M_v$, then for every nonzero $Q\in A$, we have $|\phi_{Q}(x)|_v>M_v$. Moreover, if $|x|_v>M_v$, then $\hhat_v(x)=\log |x|_v + \frac{\log |a_d|_v}{q^d-1}>0$.
\end{fact}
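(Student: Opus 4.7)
The key observation is that $K$ has characteristic $p$, so every $v\in M_K$ is non-archimedean and the ultrametric inequality is at my disposal. The plan is to isolate the leading term of $\phi_t$ on the region where $|x|_v$ is large, and then iterate. Writing $\phi_t(x)=a_0x+a_1x^q+\cdots+a_dx^{q^d}$ with $a_d\ne 0$, I would choose $M_v\ge 1$ large enough to simultaneously satisfy (a) $M_v>(|a_i|_v/|a_d|_v)^{1/(q^d-q^i)}$ for each $i<d$ with $a_i\ne 0$, and (b) $M_v>|a_d|_v^{-1/(q^d-1)}$. Condition (a) forces the term $a_dy^{q^d}$ to strictly dominate every other $a_iy^{q^i}$ in absolute value whenever $|y|_v>M_v$, so by the ultrametric
$$|\phi_t(y)|_v=|a_d|_v\,|y|_v^{q^d};$$
condition (b) ensures $|a_d|_v\,|y|_v^{q^d}>|y|_v>M_v$, so the region $\{|y|_v>M_v\}$ is stable under $\phi_t$.

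A straightforward induction on $n$ then yields the closed form
$$|\phi_{t^n}(x)|_v=|a_d|_v^{(q^{nd}-1)/(q^d-1)}\,|x|_v^{q^{nd}}\qquad(n\ge 0)$$
whenever $|x|_v>M_v$, using at the inductive step that $\phi_{t^n}=\phi_t\circ\phi_{t^{n-1}}$ together with the multiplication rule in $K\{\tau\}$ that identifies the leading $\tau^{nd}$-coefficient of $\phi_{t^n}$ as $a_d^{1+q^d+\cdots+q^{(n-1)d}}$. To upgrade from powers $t^n$ to arbitrary nonzero $Q=\sum_{i=0}^n b_it^i\in A=\bF_q[t]$ with $b_n\ne 0$, I would exploit that nonzero elements of $\bF_q$ are roots of unity and hence have $|b_i|_v=1$; combined with the closed form, condition (b) implies $|\phi_{t^n}(x)|_v>|\phi_{t^i}(x)|_v$ for every $i<n$. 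The ultrametric then gives $|\phi_Q(x)|_v=|b_n\phi_{t^n}(x)|_v=|\phi_{t^n}(x)|_v>M_v$, while the case $\deg_tQ=0$ is immediate since $\phi_Q(x)=Qx$.

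For the local height formula, I would substitute the closed form into the definition \eqref{poon-def}: since $|\phi_{t^n}(x)|_v>M_v\ge 1$, the maximum in the definition reduces to $|\phi_{t^n}(x)|_v$ itself, and dividing by $q^{nd}$ and passing to the limit yields
$$\hhat_v(x)=\log|x|_v+\frac{\log|a_d|_v}{q^d-1}.$$
Positivity is then just condition (b) after taking logarithms: $\log|x|_v>\log M_v>-\log|a_d|_v/(q^d-1)$. I anticipate no serious technical difficulty; the only delicate point is that $M_v$ must work \emph{uniformly} in $Q$, which is precisely what forces one to use the fact that the coefficients of $Q$ lie in the constant field $\bF_q$ and hence have trivial $v$-adic absolute value.
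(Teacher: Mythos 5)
Your proof is correct, and it is the standard argument: choose $M_v\ge 1$ so large that the leading term of $\phi_t$ dominates, derive $|\phi_t(y)|_v=|a_d|_v|y|_v^{q^d}$ on $\{|y|_v>M_v\}$, iterate to get the closed form for $|\phi_{t^n}(x)|_v$, reduce arbitrary $Q\in\bF_q[t]$ to its top-degree term via the ultrametric and $|b_i|_v=1$, and then read off the local height from \eqref{poon-def}. The paper itself gives no proof here, simply citing Lemma 4.4 of \cite{findrin}, so there is nothing to contrast; your argument supplies exactly the content that citation outsources.
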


Fact~\ref{jumps} is proved in Lemma $4.4$ of \cite{findrin}. In particular, Fact~\ref{jumps} shows that for each $v\in M_K$ and for each $x\in K$, we have $\hhat_v(x)\in\mathbb{Q}$. Indeed, for every $x\in K$ of positive local canonical height at $v$, there exists a polynomial $P$ such that $|\phi_P(x)|_v>M_v$. Then $\hhat_v(x)=\frac{\hhat_v(\phi_P(x))}{q^{d\deg P}}$. By Fact~\ref{jumps}, we already know that $\hhat_v(\phi_P(x))\in\mathbb{Q}$. Thus also $\hhat_v(x)\in\mathbb{Q}$.

\begin{fact}
\label{jumps-2}
Let $v\in M_K\setminus\{\infty\}$. There exists a positive constant $N_v$, and there exists a nonzero polynomial $Q_v$, such that for each $x\in K$, the following statements are true
\begin{enumerate}
\item if $|x|_v\le N_v$, then for each $Q\in A$, we have $|\phi_Q(x)|_v\le |x|_v\le N_v$.
\item either $|\phi_{Q_v}(x)|_v\le N_v$, or $|\phi_{Q_v}(x)|_v>M_v$.
\end{enumerate}
\end{fact}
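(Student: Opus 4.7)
The plan is to prove (i) by a direct non-archimedean computation and (ii) by combining the escape estimate from Fact~\ref{jumps} with a reduction-modulo-$v$ construction of $Q_v$; the main difficulty will be the intermediate annulus arising at places of bad reduction.

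For (i), since $v$ is a finite place, the normalization adopted in Section~\ref{notation} guarantees that every coefficient of $\phi_t = \sum_{i=0}^d a_i\tau^i$ (including $a_0 = t$) satisfies $|a_i|_v \le 1$. I would take $N_v$ to be any positive element of $|K^{*}|_v$ strictly less than $1$; concretely, $N_v := |\pi_v|_v$ for a uniformizer $\pi_v$ at $v$ in $K$. For $|x|_v \le N_v < 1$, the non-archimedean triangle inequality together with $|x|_v^{q^i} \le |x|_v$ yields
\[
|\phi_t(x)|_v \le \max_{0\le i\le d} |a_i|_v\cdot|x|_v^{q^i} \le |x|_v.
\]
Iterating this estimate and writing a general $Q \in \Fq[t]$ as $\sum_j c_j t^j$ with $c_j \in \Fq$ (hence $|c_j|_v \le 1$), the $\Fq$-linearity of $\phi$ propagates the bound to $|\phi_Q(x)|_v \le |x|_v$ for every $Q \in A$, which is (i).

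For (ii), let $R_v$, $\mathfrak m_v$, $k_v = R_v/\mathfrak m_v$ denote the valuation ring, its maximal ideal, and the residue field at $v$. Since the coefficients of $\phi_t$ are $v$-integral, $\phi$ descends to an $A$-action $\bar\phi$ on the finite field $k_v$, whose resulting finite $A$-module structure has a nonzero annihilator $(H_v) \subseteq A$. A first attempt is $Q_v := H_v$: by construction $\phi_{H_v}(R_v) \subseteq \mathfrak m_v = B_{N_v}$, so $|\phi_{Q_v}(x)|_v \le N_v$ whenever $|x|_v \le 1$, while Fact~\ref{jumps} supplies $|\phi_{Q_v}(x)|_v > M_v$ for $|x|_v > M_v$. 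This already settles (ii) at places of good reduction, where $M_v = 1$ and the two cases above cover all of $K$.

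The main obstacle is the intermediate range $1 < |x|_v \le M_v$, which is non-empty exactly at places of bad reduction (where $|a_d|_v < 1$ forces $M_v = |a_d|_v^{-1/(q^d-1)} > 1$). My plan is to replace $H_v$ by a product $Q_v := H_v\cdot \Xi_v$ for an auxiliary polynomial $\Xi_v \in A$ (for instance a suitable power of the irreducible $P_v \in A$ lying below $v$), and to argue levelwise. Any intermediate $x$ can be written as $x = y\pi_v^{-j}$ with $y$ a $v$-unit and $1 \le j \le \log_{|\pi_v|_v^{-1}} M_v$, and the discreteness of $|K^{*}|_v$ leaves only finitely many admissible $j$. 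Expanding $\phi_{Q_v}(x) = \sum_i a_i^{(Q_v)} x^{q^i}$ and exploiting the extra $v$-adic shrinking contributed by $\Xi_v$ on the sub-leading terms, I expect for each level $j$ that either the sub-leading contributions already land in $B_{N_v}$, or the leading term dominates and satisfies $|\phi_{Q_v}(x)|_v = |a_{d'}^{(Q_v)}|_v\, |x|_v^{q^{d'}} > M_v$ by the same leading-coefficient computation that produces Fact~\ref{jumps}. Choosing $\Xi_v$ uniformly over the finitely many levels $j$ then yields a single $Q_v$ working for every $x \in K$, completing the proof.
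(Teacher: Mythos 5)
Your proof of part (i) is correct, modulo an inessential difference: you take $N_v = |\pi_v|_v$ while the paper takes the (generally larger) optimal value $N_v = \min_{1\le i\le d}|a_i|_v^{-1/(q^i-1)}$; both choices make the non-archimedean contraction estimate work.

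For part (ii), however, there is a genuine gap precisely at the step you flag as the ``main difficulty.'' Your residue-field argument correctly shows that $\phi_{H_v}$ sends $\{|x|_v\le 1\}$ into $\{|x|_v\le N_v\}$, and Fact~\ref{jumps} handles $\{|x|_v > M_v\}$, so at good-reduction places (where one may take $M_v = 1$) the two regions cover $K$ and you are done. But at bad-reduction places the intermediate annulus $1 < |x|_v \le M_v$ is nonempty, and your treatment of it is only a plan, not a proof. The phrase ``I expect for each level $j$ that either the sub-leading contributions already land in $B_{N_v}$, or the leading term dominates'' ignores exactly the phenomenon that makes this hard: two (or more) of the terms $a_i^{(Q_v)}x^{q^i}$ can share the maximal $v$-adic size, and then cancellation can push $|\phi_{Q_v}(x)|_v$ to essentially any smaller value, including back into the same annulus. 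Multiplying $H_v$ by a fixed $\Xi_v$ merely shifts the problem: one must rule out that $\phi_{\Xi_v}(x)$ again lands in the annulus with fresh cancellations, and a priori the degree of the polynomial needed to escape could grow with $x$. Establishing a \emph{uniform} bound $d_v$ on that degree is exactly the nontrivial content that the paper outsources to Remark~$5.12$ of \cite{locleh}; once such a $d_v$ exists, the paper sets $Q_v := \prod_{\deg P \le d_v} P$ and combines Fact~\ref{jumps} with part (i) to obtain a single $Q_v$ working for all $x$. Your sketch supplies neither the uniform degree bound nor a substitute mechanism that circumvents the cancellation issue, so as written the proof of (ii) does not go through at bad-reduction places.
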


\begin{proof}[Proof of Fact~\ref{jumps-2}.]
This was proved in \cite{locleh}. It is easy to see that 
$$N_v:=\min_{1\le i\le d}|a_i|_v^{-\frac{1}{q^i-1}}$$
satisfies condition $(i)$, but the proof of $(ii)$ is much more
complicated. In \cite{locleh}, the first author proved that there
exists a positive integer $d_v$ such that for every $x\in K$, there
exists a polynomial $Q$ of degree at most $d_v$ such that either
$|\phi_Q(x)|_v>M_v$, or $|\phi_Q(x)|_v\le N_v$ (see Remark $5.12$ which
is valid for every place which does not lie over $v_{\infty}$). Using
Fact~\ref{jumps} and $(i)$, we conclude that the polynomial
$Q_v:=\prod_{\deg P\le d_v} P$ satisfies property $(ii)$.
\end{proof}

Using Facts~\ref{jumps} and \ref{jumps-2} we prove the following important result valid for finite places.
\begin{lemma}
\label{jumps at a finite place}
Let $v\in M_K\setminus\{\infty\}$. Then there exists a positive integer $D_v$ such that for every $x\in K$, we have $D_v\cdot\hhat_v(x)\in\mathbb{N}$. If in addition we assume $v$ is a place of good reduction for $\phi$, then we may take $D_v=1$.
\end{lemma}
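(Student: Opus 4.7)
The plan is to use Fact~\ref{jumps-2} to push $x$ into one of two mutually exclusive local regimes at $v$, use Fact~\ref{jumps} to read off $\hhat_v$ by a closed formula in the nontrivial regime, and then descend to $x$ via the homogeneity property $\hhat_v(\phi_Q(x)) = q^{d\deg Q}\,\hhat_v(x)$.

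Fix $v \in M_K \setminus \{\infty\}$, and let $M_v$, $N_v$, $Q_v$ be as in Facts~\ref{jumps} and \ref{jumps-2}. Given $x \in K$, set $y := \phi_{Q_v}(x)$; by Fact~\ref{jumps-2}(ii), either $|y|_v \le N_v$ or $|y|_v > M_v$. In the first subcase, Fact~\ref{jumps-2}(i) gives $|\phi_{t^n}(y)|_v \le N_v$ for every $n \ge 0$, so $\hhat_v(y) = 0$ directly from \eqref{poon-def}, and thus $\hhat_v(x) = 0$ as well. In the second subcase, Fact~\ref{jumps} yields
\[
\hhat_v(y) = \log|y|_v + \frac{\log|a_d|_v}{q^d - 1},
\]
and homogeneity gives
\[
\hhat_v(x) \;=\; \frac{\hhat_v(y)}{q^{d\deg Q_v}} \;=\; \frac{(q^d - 1)\log|y|_v + \log|a_d|_v}{q^{d\deg Q_v}\,(q^d - 1)}.
\]
By the normalization convention fixed right after \eqref{weil}, both $\log|y|_v$ and $\log|a_d|_v$ lie in $\mathbb{Z}$, so the numerator is an integer. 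Since $\hhat_v(x) \ge 0$ always, the choice
\[
D_v \;:=\; q^{d\deg Q_v}\,(q^d - 1)
\]
forces $D_v\,\hhat_v(x) \in \mathbb{N}$ in both subcases.

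For the good-reduction refinement, all coefficients $a_i$ of $\phi_t$ are $v$-adic integers and $|a_d|_v = 1$. If $|x|_v \le 1$, then the strong triangle inequality iteratively forces $|\phi_{t^n}(x)|_v \le 1$ for every $n$, so $\hhat_v(x) = 0$. If $|x|_v > 1$, then the term $a_d x^{q^d}$ strictly dominates in $\phi_t(x)$, giving $|\phi_t(x)|_v = |x|_v^{q^d}$ and, iteratively, $|\phi_{t^n}(x)|_v = |x|_v^{q^{nd}}$, hence $\hhat_v(x) = \log|x|_v \in \mathbb{Z}_{\ge 0}$. In both subcases $\hhat_v(x) \in \mathbb{N}$, so $D_v = 1$ suffices.

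I do not expect a substantive obstacle here: all of the analytic content is already packaged in Facts~\ref{jumps} and \ref{jumps-2}, and once $Q_v$ cleanly separates $K$ into the ``bounded'' and ``escaping'' loci at $v$, only bookkeeping remains. The only subtle point to keep in mind is that $\log|\cdot|_v$ must be genuinely $\mathbb{Z}$-valued on $K^\times$ for the numerator of the displayed fraction to be an integer; this is exactly what the normalization convention following \eqref{weil} guarantees, and it is also the reason the lemma is stated for $x \in K$ rather than for $x \in \bK$.
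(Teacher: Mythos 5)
Your proof is correct and follows essentially the same route as the paper: push $x$ forward by $Q_v$ into one of the two regimes of Fact~\ref{jumps-2}(ii), read off $\hhat_v$ from Fact~\ref{jumps} in the escaping regime, descend by homogeneity, and take $D_v = q^{d\deg Q_v}(q^d-1)$. The only cosmetic difference is in the good-reduction refinement, where you work out the dominance of $a_d x^{q^d}$ directly rather than citing Lemma~4.13 of \cite{locleh} as the paper does; the substance is identical.
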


\begin{proof}[Proof of Lemma~\ref{jumps at a finite place}.]
Let $x\in K$. If $\hhat_v(x)=0$, then we have nothing to show. Therefore, assume from now on that $\hhat_v(x)>0$. Using $(ii)$ of Fact~\ref{jumps-2}, there exists a polynomial $Q_v$ (depending only on $v$, and not on $x$) such that $|\phi_{Q_v}(x)|_v>M_v$ (clearly, the other option from $(ii)$ of Lemma~\ref{jumps-2} is not available because we assumed that $\hhat_v(x)>0$). Moreover, using the definition of the local height, and also Fact~\ref{jumps}, 
\begin{equation}
\label{integer}
\hhat_v(x)=\frac{\hhat_v(\phi_{Q_v}(x))}{q^{d\deg Q_v}}=\frac{\log |\phi_{Q_v}(x)|_v + \frac{\log |a_d|_v}{q^d-1}}{q^{d\deg Q_v}}.
\end{equation}
Because both $\log |\phi_{Q_v}(x)|_v$ and $\log |a_d|_v$ are integer numbers, \eqref{integer} yields the conclusion of Lemma~\ref{jumps at a finite place} (we may take $D_v=q^{d\deg Q_v}(q^d-1)$).

The second part of Lemma~\ref{jumps at a finite place} follows immediately from Lemma $4.13$ of \cite{locleh}. Indeed, if $v$ is a place of good reduction for $\phi$, then $|x|_v>1$ (because we assumed $\hhat_v(x)>0$). But then, $\hhat_v(x)=\log |x|_v$ (here we use the fact that $v$ is a place of good reduction, and thus $a_d$ is a $v$-adic unit). Hence $\hhat_v(x)\in\mathbb{N}$, and we may take $D_v=1$.
\end{proof}

The following result is an immediate corollary of Fact~\ref{jumps at a finite place}.
\begin{cor}
\label{uniform-D}
There exists a positive integer $D$ such that for every $v\in M_K\setminus\{\infty\}$, and for every $x\in K$, we have $D\cdot\hhat_v(x)\in\mathbb{N}$.
\end{cor}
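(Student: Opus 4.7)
The plan is to reduce to finitely many places by exploiting the good-reduction clause of Lemma~\ref{jumps at a finite place}. First I would observe that the set
\[
S_{\mathrm{bad}} := \{\,v\in M_K\setminus\{\infty\} : \phi \text{ has bad reduction at } v\,\}
\]
is finite. This is because $\phi_t=\sum_{i=0}^d a_i\tau^i$ has only finitely many coefficients, and by the normalization fixed at the start of Section~\ref{notation} each $a_i$ is integral at every finite place; hence the only finite places of bad reduction are those dividing the leading coefficient $a_d$, which form a finite set.

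Next, for each $v\in S_{\mathrm{bad}}$, let $D_v\in\mathbb{N}^{*}$ be the integer provided by Lemma~\ref{jumps at a finite place}, so that $D_v\cdot\hhat_v(x)\in\mathbb{N}$ for every $x\in K$. Define
\[
D := \prod_{v\in S_{\mathrm{bad}}} D_v,
\]
which is a well-defined positive integer since $S_{\mathrm{bad}}$ is finite. (An LCM would work equally well.)

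Finally I would verify that this $D$ has the desired property. For any $v\in M_K\setminus\{\infty\}$ and any $x\in K$, there are two cases. If $v\in S_{\mathrm{bad}}$, then $D_v\mid D$ and $D_v\cdot\hhat_v(x)\in\mathbb{N}$ by construction, so $D\cdot\hhat_v(x)\in\mathbb{N}$. If $v\notin S_{\mathrm{bad}}$, then $\phi$ has good reduction at $v$, and the second part of Lemma~\ref{jumps at a finite place} gives $\hhat_v(x)\in\mathbb{N}$ directly, so $D\cdot\hhat_v(x)\in\mathbb{N}$ trivially. There is no real obstacle here; the only point requiring care is the finiteness of $S_{\mathrm{bad}}$, which is already built into our standing normalization of $\phi$.
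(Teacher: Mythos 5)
Your proof is correct and fills in exactly what the paper intends by calling the corollary ``immediate'': the finite places of bad reduction form a finite set (since under the standing normalization every $a_i$ is integral at finite places, so bad reduction happens only where $|a_d|_v\ne 1$, a finite set), one takes $D$ to be a common multiple of the finitely many $D_v$ for those places, and the good-reduction clause of Lemma~\ref{jumps at a finite place} handles all remaining finite places with $D_v=1$. This is the same approach the paper implies.
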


Next we prove a similar result as in Lemma~\ref{jumps at a finite place} which is valid for the \emph{only} infinite place of $K$.
\begin{lemma}
\label{jumps at an infinite place}
There exists a positive integer $D_{\infty}$ such that for every $x\in K$, either $\hhat_v(x)>0$ for some $v\in M_K\setminus\{\infty\}$, or $D_{\infty}\cdot\hhat_{\infty}(x)\in\mathbb{N}$.
\end{lemma}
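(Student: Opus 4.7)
The plan is to exploit the hypothesis $\hhat_v(x)=0$ at every finite $v$ together with the uniqueness of $\infty$ to confine the forward orbit of $x$ to a finite subset of $K$ until it escapes the ball of radius $M_\infty$ at $\infty$, thereby bounding the escape time uniformly in $x$. Fact~\ref{jumps} applied at the escape moment will then produce $D_\infty$ directly.

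I would first dispose of the easy cases: if $\hhat_\infty(x)=0$ the conclusion is immediate, while if some finite $\hhat_v(x)>0$ the first alternative in the statement holds. So assume $\hhat_v(x)=0$ for every finite $v$ and $\hhat_\infty(x)>0$; in particular $x$ is non-torsion by Denis' theorem. Because $\hhat_v(\phi_{t^n}(x))=q^{nd}\hhat_v(x)=0$ for all $n$, Lemma $4.13$ of \cite{locleh} at good finite places and Fact~\ref{jumps} at the finitely many bad finite places together yield $|\phi_{t^n}(x)|_v\le 1$ at every good finite $v$ and $|\phi_{t^n}(x)|_v\le M_v$ at every bad finite $v$, valid for every $n\ge 0$.

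Now set
\[
n_0:=\min\{n\ge 0:|\phi_{t^n}(x)|_\infty>M_\infty\},
\]
which is finite because $\hhat_\infty(x)>0$ forces $|\phi_{t^n}(x)|_\infty\to\infty$. For $0\le n<n_0$ the element $\phi_{t^n}(x)$ has uniformly bounded absolute value at \emph{every} place of $K$---this is where the unique-infinite-place hypothesis enters, since together with $|y|_\infty\le M_\infty$ the finite-place bounds really do bound $y$ at every place. Encoding these bounds as a single divisor $D$ on the smooth projective model of $K$, we obtain $\phi_{t^n}(x)\in L(D)$ for every $0\le n<n_0$, and $L(D)$ is a finite-dimensional vector space over the finite constant field of $K$, hence a finite set. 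The orbit is injective, since $\phi_{t^a}(x)=\phi_{t^b}(x)$ with $a\ne b$ would give $\phi_{t^a-t^b}(x)=0$ by $\mathbb{F}_q$-linearity of $\phi$ on $A$, contradicting that $x$ is non-torsion. Therefore $n_0\le|L(D)|=:N$, a constant depending only on $\phi$ and $K$.

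With the uniform bound $n_0\le N$ in hand, Fact~\ref{jumps} applied to $\phi_{t^{n_0}}(x)$ gives
\[
\hhat_\infty\bigl(\phi_{t^{n_0}}(x)\bigr)=\log|\phi_{t^{n_0}}(x)|_\infty+\frac{\log|a_d|_\infty}{q^d-1}\in\frac{1}{q^d-1}\mathbb{Z},
\]
and the scaling identity $\hhat_\infty(x)=\hhat_\infty(\phi_{t^{n_0}}(x))/q^{n_0 d}$ places $\hhat_\infty(x)$ in $\frac{1}{(q^d-1)q^{n_0 d}}\mathbb{Z}\subseteq\frac{1}{(q^d-1)q^{Nd}}\mathbb{Z}$. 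Hence $D_\infty:=(q^d-1)q^{Nd}$ satisfies the conclusion. The main obstacle throughout is the uniform bound on $n_0$; the unique-infinite-place hypothesis is exactly what turns the finite-place bounds (together with $|y|_\infty\le M_\infty$) into membership in a genuinely finite set, making the pigeonhole-via-injectivity step go through.
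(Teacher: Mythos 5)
Your proof is correct, and it takes a genuinely different route from the paper's. The paper disposes of the same easy cases and then simply invokes Theorem~$4.4$ of \cite{mw2} (the Lehmer/Mordell--Weil theorem for Drinfeld modules), which asserts that for every nontorsion $x\in K$ there is \emph{some} place $v$ and a polynomial $Q$ of degree bounded independently of $x$ with $|\phi_Q(x)|_v > M_v$; the single-infinite-place hypothesis then forces $v=\infty$, and the conclusion follows exactly as in Lemma~\ref{jumps at a finite place}. You instead manufacture the uniform degree bound from scratch: you confine the pre-escape orbit $\{\phi_{t^n}(x):0\le n<n_0\}$ to a Riemann--Roch space $L(D)$ for a fixed divisor $D$ supported on $\infty$ and the bad finite places, observe that the orbit is injective because $x$ is nontorsion (using $\phi_{t^a}-\phi_{t^b}=\phi_{t^a-t^b}$), and pigeonhole to get $n_0\le|L(D)|$. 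The unique-infinite-place hypothesis is used in the same spirit in both arguments, but in yours it is what makes $L(D)$ a genuine (finite) Riemann--Roch space rather than something unbounded at a second archimedean-type place. The trade-off: the paper's route is a one-line citation of a substantially deeper and more general result, while yours is self-contained, elementary, and tailored precisely to the hypotheses of the lemma. Both correctly yield $D_\infty=(q^d-1)q^{Nd}$ with $N$ depending only on $\phi$ and $K$.
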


Before proceeding to its proof, we observe that we cannot remove the
assumption that $\hhat_v(x)=0$ for every finite place $v$, in order to
obtain the existence of $D_{\infty}$ in the statement of
Lemma~\ref{jumps at an infinite place}. Indeed, we know that in $K$
there are points of arbitrarily small (but positive) local height at
$\infty$ (see Example $6.1$ from \cite{locleh}). Therefore, there
exists \emph{no} positive integer $D_{\infty}$ which would clear all
the possible denominators for the local heights at $\infty$ of those
points. However, it turns out (as we will show in the proof of
Lemma~\ref{jumps at an infinite place}) that for such points $x$ of
\emph{very} small local height at $\infty$, there exists some other
place $v$ for which $\hhat_v(x)>0$.

\begin{proof}[Proof of Lemma~\ref{jumps at an infinite place}.]
  Let $x\in K$. If $x\in\phi_{\tor}$, then we have nothing to prove
  (every positive integer $D_{\infty}$ would work because
  $\hhat_{\infty}(x)=0$). Thus, we assume $x$ is a nontorsion point.
  If $\hhat_v(x)>0$ for some place $v$ which does not lie over
  $v_{\infty}$, then again we are done. So, assume from now on that
  $\hhat_v(x)=0$ for every finite place $v$.

  By proceeding as in the proof of Lemma~\ref{jumps at a finite
    place}, it suffices to show that there exists a polynomial
  $Q_{\infty}$ of degree bounded independently of $x$ such that
  $|\phi_{Q_{\infty}}(x)|_{\infty}>M_{\infty}$ (with the notation as
  in Fact~\ref{jumps}). This is proved in Theorem $4.4$ of \cite{mw2}.
  The first author showed in \cite{mw2} that there exists a positive
  integer $L$ (depending only on the number of places of bad reduction
  of $\phi$) such that for every nontorsion point $x$, there exists a
  place $v\in M_K$, and there exists a polynomial $Q$ of degree less
  than $L$ such that $|\phi_Q(x)|_v>M_v$. Because we assumed that
  $\hhat_v(x)=0$ for every $v\ne\infty$, then the above statement
  yields the existence of $D_{\infty}$.
\end{proof}

We will prove Theorem \ref{Siegel} by showing that a certain $\limsup$ is
positive.  This will contradict the existence of infinitely many
$S$-integral points in a finitely generated $\phi$-submodule.
Our first step will be a result about the $\liminf$ of the sequences which will appear in the proof of Theorem~\ref{Siegel}.  

\begin{lemma}\label{inf}
  Suppose that $\Gamma$ is a torsion-free $\phi$-submodule of $\bG_a(K)$ generated by
  elements $\gamma_1, \dots, \gamma_r$. For each $i\in\{1,\dots,r\}$ let $(P_{n,i})_{n\in\mathbb{N}^{*}}\subset\Fq[t]$ be a sequence of polynomials such that for each $m\ne n$, the $r$-tuples $(P_{n,i})_{1\le i\le r}$ and $(P_{m,i})_{1\le i\le r}$ are distinct. Then for every $v\in M_K$, we have
\begin{equation}
\label{non-negative}
\liminf_{n\to\infty} \frac{ \log |\sum_{i=1}^r
  \phi_{P_{n,i}}(\gamma_i)-\alpha |_v}{\sum_{i=1}^r q^{d\deg P_{n,i}}}
\ge 0.
\end{equation}
\end{lemma}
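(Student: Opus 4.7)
The plan is to bound the numerator from below using Fact~\ref{Bosser} when $v$ lies above $v_{\infty}$ and Statement~\ref{Bosser-wannabe} when $v$ is finite, and then to exploit the exponential growth of the denominator $\sum_i q^{d\deg P_{n,i}}$. First I would observe that since the $r$-tuples $(P_{n,i})_{1\le i\le r}$ are pairwise distinct and only finitely many tuples in $A^r$ have every coordinate of degree below a given bound, we must have $D_n := \max_{1\le i\le r}\deg P_{n,i} \to \infty$ as $n\to\infty$; hence the denominator is at least $q^{dD_n}$ and grows exponentially in $D_n$.

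Next I would treat those $n$ for which $\sum_i\phi_{P_{n,i}}(\gamma_i)\ne\alpha$. When $v\mid v_{\infty}$, Fact~\ref{Bosser} (applicable to all but finitely many $n$, since all-constant tuples form a finite set) gives
$$\log\Bigl|\sum_{i=1}^r \phi_{P_{n,i}}(\gamma_i)-\alpha\Bigr|_v\ge C_0+C_1\,D_n\log D_n$$
with $C_0,C_1<0$ depending only on $\phi,\gamma_1,\dots,\gamma_r,\alpha$; when $v$ is finite, the analogous lower bound $-C_1'-C_2'\,D_n^{C_3'}$ comes from Statement~\ref{Bosser-wannabe}. In either case the right-hand side is bounded below by a polynomial (possibly with a $\log$ factor) in $D_n$, and dividing by $q^{dD_n}$ yields a quantity tending to zero, which gives the desired liminf bound on this subsequence.

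The one subtlety is the possibility of having some $n$ with $\sum_i\phi_{P_{n,i}}(\gamma_i)=\alpha$, on which the logarithm equals $-\infty$. Since $\Gamma$ is torsion-free and finitely generated over the PID $A=\Fq[t]$, it is a free $A$-module; I would replace the generating set by an $A$-basis (adjusting the sequences via an integral linear change of variables) so that the representation map $A^r\to\Gamma$, $(Q_i)\mapsto\sum_i\phi_{Q_i}(\gamma_i)$, becomes injective. Under this reduction at most one $r$-tuple can satisfy $\sum=\alpha$, and the corresponding exceptional index is discarded without affecting the liminf. The basis reduction is the main conceptual step; the rest is a routine polynomial-versus-exponential growth comparison.
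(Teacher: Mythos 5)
Your core argument — invoking Fact~\ref{Bosser} at the infinite place, Statement~\ref{Bosser-wannabe} at the finite places, and then observing that the denominator $\sum_i q^{d\deg P_{n,i}}\ge q^{dD_n}$ grows exponentially while the lower bound on the numerator decays only polynomially in $D_n$ (with $D_n\to\infty$ because the tuples are distinct) — is exactly the paper's proof, written out in a bit more detail. That part is fine.

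The part that does not work is the basis reduction you propose to deal with indices $n$ for which $\sum_i\phi_{P_{n,i}}(\gamma_i)=\alpha$. The liminf is taken with the \emph{original} denominator $\sum_i q^{d\deg P_{n,i}}$, which a change of generators does not touch. If you express each $\gamma_i$ in terms of a basis $\delta_1,\dots,\delta_s$ and set $Q_{n,j}=\sum_i P_{n,i}B_{ij}$, the map $(P_{n,i})\mapsto (Q_{n,j})$ is far from injective when the $\gamma_i$ are $A$-linearly dependent; infinitely many distinct tuples $(P_{n,i})$ can collapse onto the single exceptional tuple $(Q_j)$ with $\sum_j\phi_{Q_j}(\delta_j)=\alpha$, so you cannot discard ``the corresponding exceptional index'' — there may be infinitely many of them, and the liminf in \eqref{non-negative} is then genuinely $-\infty$. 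A concrete instance: $r=2$, $\gamma_1=\gamma_2=\gamma$ nontorsion, $\alpha=0$, $P_{n,1}=t^n$, $P_{n,2}=-t^n$; the tuples are pairwise distinct, yet $\phi_{P_{n,1}}(\gamma_1)+\phi_{P_{n,2}}(\gamma_2)=0=\alpha$ for all $n$. So the lemma as literally stated requires $\gamma_1,\dots,\gamma_r$ to be an $A$-basis (this is how it is applied in Theorem~\ref{Siegel}, where moreover no $S$-integral element can equal $\alpha$), and with that understanding the distinctness of the tuples already forces at most one exceptional $n$, so no reduction is needed. The paper's proof simply restricts attention to the subsequence on which $\sum_i\phi_{P_{n,i}}(\gamma_i)\ne\alpha$ and leaves this point implicit; your write-up promotes that implicit point to ``the main conceptual step'' and then handles it incorrectly, whereas the genuine content of the lemma is the diophantine input from Fact~\ref{Bosser} and Statement~\ref{Bosser-wannabe}.
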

\begin{proof}
  Suppose that for some $\epsilon>0$, there exists a sequence $(n_k)_{k\ge
    1}\subset\mathbb{N}^{*}$ such that $\sum_{i=1}^r \phi_{P_{n_k,i}}(\gamma_i)\ne\alpha$ and
\begin{equation}
\label{still non-negative}
\frac{ \log |\sum_{i=1}^r \phi_{P_{n_k,i}}(\gamma_i)-\alpha |_v}{\sum_{i=1}^r q^{d\deg P_{n_k,i}}} < -\epsilon,
\end{equation}
for every $k\ge 1$.  But taking the lower bound from Fact~\ref{Bosser}
or Statement~\ref{Bosser-wannabe} (depending on whether $v$ is the
infinite place or not) and dividing through by $\sum_{i=1}^r q^{d\deg
  P_{n_k,i}}$, we see that this is impossible.
\end{proof}

The following proposition is the key technical result required to
prove Theorem \ref{Siegel}.  This proposition plays the same role that Lemma~\ref{alt_loc_height} plays in the proof of Theorem~\ref{Silverman}, or that
Corollary $3.13$ plays in the proof of Theorem $1.1$ from \cite{findrin}.  Note that is does not
provide an exact formula for the canonical height of a point, however;
it merely shows that a certain limit is positive.  This will suffice for
our purposes since we only need that a certain sum of limits be
positive in order to prove Theorem \ref{Siegel}.

\begin{proposition}\label{sup}
 Let $\Gamma$ be a torsion-free $\phi$-submodule of $\bG_a(K)$
  generated by elements $\gamma_1, \dots, \gamma_r$. For each $i\in\{1,\dots,r\}$ let $(P_{n,i})_{n\in\mathbb{N}^{*}}\subset\Fq[t]$ be a sequence of polynomials such that for each $m\ne n$, the $r$-tuples $(P_{n,i})_{1\le i\le r}$ and $(P_{m,i})_{1\le i\le r}$ are distinct.  Then there exists a place $v\in M_K$
  such that
\begin{equation}
\label{positive}
\limsup_{n\to\infty} \frac{ \log |\sum_{i=1}^r
\phi_{P_{n,i}}(\gamma_i) - \alpha |_v}{\sum_{i=1}^r q^{d\deg
P_{n,i}}}> 0.
\end{equation}
\end{proposition}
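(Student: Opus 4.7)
The plan is to argue by contradiction. Assume that
\[
\limsup_{n\to\infty}\frac{\log|\delta_n|_v}{\sum_{i=1}^r q^{d\deg P_{n,i}}}\le 0
\qquad\text{for every }v\in M_K,
\]
where $\delta_n:=\eta_n-\alpha$ and $\eta_n:=\sum_{i=1}^r\phi_{P_{n,i}}(\gamma_i)$; combined with Lemma~\ref{inf} this forces the limit to be $0$ at every place. I will then exhibit a single place at which $\log^+|\delta_n|_v$ grows at least linearly in $\sum_i q^{d\deg P_{n,i}}$, producing the contradiction.

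Since $\Gamma$ is a torsion-free, finitely generated module over the PID $A=\mathbb{F}_q[t]$, it is free, and after replacing the generators by an $A$-basis (and discarding those $n$'s producing repeating $\eta_n$, which contribute ratio tending to $0$) I may assume $\gamma_1,\dots,\gamma_r$ are $A$-linearly independent. The central estimate is then
\[
\hhat(\eta_n)\ \ge\ c\cdot\max_{1\le i\le r}q^{d\deg P_{n,i}}
\]
for some positive constant $c$ depending only on $\phi$ and $\gamma_1,\dots,\gamma_r$. To establish it, observe first that each local height $\hhat_v$ satisfies the ultrametric inequality $\hhat_v(x+y)\le\max(\hhat_v(x),\hhat_v(y))$, whence the global $\hhat$ is subadditive, and by concavity of $t\mapsto t^{1/d}$ so is $\hhat^{1/d}$. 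Together with the scaling $\hhat^{1/d}(\phi_a(x))=|a|_\infty\hhat^{1/d}(x)$ for $a\in A$ and with Denis's theorem that $\hhat$ vanishes only on $\phi_{\tor}$, $\hhat^{1/d}$ defines a norm on the finite-dimensional $\Frac(A)_\infty$-vector space $\Gamma\otimes_A\Frac(A)_\infty$. Equivalence of norms on finite-dimensional vector spaces over the locally compact field $\Frac(A)_\infty=\mathbb{F}_q(\!(1/t)\!)$ (whose unit ball is compact) then yields the desired bound.

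Since $h$ and $\hhat$ differ by $O(1)$, and $h(\delta_n)\ge h(\eta_n)-h(\alpha)$ by the ultrametric triangle inequality, I conclude that $h(\delta_n)\ge c'\max_i q^{d\deg P_{n,i}}$ for all sufficiently large $n$. Let $\cT_0\subset M_K$ be the finite set consisting of the infinite place, the places of bad reduction of $\phi$, and all places at which some $\gamma_i$ or $\alpha$ fails to be integral; then for $v\notin\cT_0$ one has $|\delta_n|_v\le 1$, so $\log^+|\delta_n|_v=0$ and $h(\delta_n)=\sum_{v\in\cT_0}\log^+|\delta_n|_v$. By pigeonhole over the finite set $\cT_0$, some fixed place $v\in\cT_0$ satisfies $\log^+|\delta_n|_v\ge (c'/|\cT_0|)\max_i q^{d\deg P_{n,i}}$ for infinitely many $n$; since $\sum_i q^{d\deg P_{n,i}}\le r\max_i q^{d\deg P_{n,i}}$, this forces the limsup at this $v$ to be at least $c'/(r|\cT_0|)>0$, contradicting the assumption.

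The principal obstacle lies in the central estimate for $\hhat(\eta_n)$; while subadditivity and $|\cdot|_\infty$-homogeneity of $\hhat^{1/d}$ are essentially formal, verifying that the extension of $\hhat^{1/d}$ to $\Gamma\otimes_A\Frac(A)_\infty$ is a genuine (not merely semi-definite) norm requires the Mordell--Weil-type theory for Drinfeld modules: specifically, $\hhat$ must be shown to be non-degenerate on $\Gamma\otimes_A\Frac(A)$, and that non-degeneracy must be preserved under completion with respect to $|\cdot|_\infty$. The auxiliary Lemma~\ref{jumps at a finite place}, Corollary~\ref{uniform-D}, and Lemma~\ref{jumps at an infinite place} may in fact play a role here, by providing the discreteness of local heights that underlies the non-degeneracy.
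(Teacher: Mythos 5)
Your reduction to the ``central estimate'' $\hhat(\eta_n)\ge c\max_iq^{d\deg P_{n,i}}$ is not a step towards Proposition~\ref{sup}; it is the proposition itself in disguise, and the argument you give for it is circular. Invoking equivalence of norms on the $\Frac(A)_\infty$-vector space $\Gamma\otimes_A\Frac(A)_\infty$ presupposes that the continuous extension of $\hhat^{1/d}$ really is a \emph{norm} there. By homogeneity, that extension is nonvanishing on the unit sphere of $\Frac(A)_\infty^r$ precisely when
$\inf_{\mathbf{P}\in A^r\setminus\{0\}}\hhat\bigl(\sum_i\phi_{P_i}(\gamma_i)\bigr)/q^{d\max_i\deg P_i}>0$,
which is exactly your central estimate. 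Denis's theorem gives positivity on $\Frac(A)^r$ only, and a seminorm that is positive on a dense subspace need not be positive on the completion: the kernel can be a nontrivial subspace of $\Frac(A)_\infty^r$ that meets $\Frac(A)^r$ only at the origin, just as a line of ``irrational'' slope meets $\Frac(A)^2$ only at $0$. In the elliptic-curve setting the analogous extension is automatically positive definite because the N\'eron--Tate height is a quadratic form and the parallelogram law propagates positivity to $\mathrm{MW}\otimes\mathbb{R}$; for Drinfeld modules there is no such multilinear structure, so the extension step genuinely requires work.

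You correctly suspect that Lemma~\ref{jumps at a finite place}, Corollary~\ref{uniform-D}, and Lemma~\ref{jumps at an infinite place} are relevant, but they do not close the gap on their own. What discreteness of local heights delivers directly is a uniform Lehmer-type bound $\hhat(\eta)\ge c_0>0$ for every nonzero $\eta\in\Gamma$; this is compatible with $\hhat(\eta_n)/q^{d\max\deg P_{n,i}}\to 0$ and hence does not imply the needed growth rate. The paper uses the discreteness in a quite different way: it runs an induction on $r$, first reducing via the inductive hypothesis to ``balanced'' degree tuples (\eqref{squeezed-2}), and then at each place $v$ performs a Euclidean-algorithm-style descent --- when the $v$-adic maxima among $|\phi_{P_{n,i}}(\gamma_i)|_v$ tie, it replaces a generator $\gamma_{j_1}$ by $\gamma_{j_1}+\sum_i\phi_{C_{j_i}}(\gamma_{j_i})$ and the polynomials by remainders, strictly decreasing $\sum_i\hhat_v(\cdot)$; Corollary~\ref{uniform-D} (resp.\ Lemma~\ref{jumps at an infinite place}, which is where the single-infinite-place hypothesis enters) guarantees the descent terminates, and torsion-freeness prevents the terminal configuration from collapsing. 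That combinatorial reduction is the real content of the proposition. Everything in your write-up downstream of the central estimate --- the $O(1)$ comparison of $h$ with $\hhat$, the ultrametric inequality $h(\delta_n)\ge h(\eta_n)-h(\alpha)$, the pigeonhole over the finite bad set $\cT_0$ --- is fine, but the estimate feeding into it is the whole theorem and has not been proved.
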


\begin{proof}
Using the triangle inequality for the $v$-adic norm, and the fact that
$$\lim_{n\to\infty}\sum_{i=1}^r q^{d\deg P_{n,i}} = +\infty,$$
we
conclude that proving that \eqref{positive} holds is equivalent to
proving that for some place $v$, we have
\begin{equation}
\label{positive-2}
\limsup_{n\to\infty} \frac{ \log |\sum_{i=1}^r \phi_{P_{n,i}}(\gamma_i) |_v}{\sum_{i=1}^r q^{d\deg P_{n,i}}} > 0.
\end{equation}
We also observe that it suffices to prove Proposition~\ref{sup} for a subsequence $(n_k)_{k\ge 1}\subset\mathbb{N}^{*}$.

We prove \eqref{positive-2} by induction on $r$. If $r=1$, then by
Corollary $3.13$ of \cite{findrin} (see also our Lemma~\ref{alt_loc_height}),
\begin{equation}
\label{cazul-1}
\limsup_{\deg P\to\infty}\frac{\log |\phi_P(\gamma_1)|_v} {q^{d\deg
    P}} = \hhat_v(\gamma_1)
\end{equation}
and because $\gamma_1\notin\phi_{\tor}$,
there exists a place $v$ such that $\hhat_v(\gamma_1) > 0$, thus
proving \eqref{positive-2} for $r=1$. Therefore, we assume
\eqref{positive-2} is established for all $\phi$-submodules $\Gamma$ of rank less
than $r$ and we will prove it for $\phi$-submodules of rank $r$.

In the course of our argument for proving \eqref{positive-2},
we will replace several times a given sequence with a subsequence of
itself (note that passing to a subsequence can only make the $\limsup$
smaller). For the sake of not clustering the notation, we will drop
the extra indices which would be introduced by dealing with the
subsequence.

Let $S_0$ be the set of places $v\in M_K$ for which there exists some $\gamma\in\Gamma$ such that $\hhat_v(\gamma)>0$. The following easy fact will be used later in our argument.
\begin{fact}
\label{S_0 is finite}
The set $S_0$ is finite.
\end{fact}

\begin{proof}[Proof of Fact~\ref{S_0 is finite}.]
We claim that $S_0$ equals the \emph{finite} set $S_0'$ of places $v\in M_K$ for which there exists $i\in\{1,\dots,r\}$ such that $\hhat_v(\gamma_i)>0$. Indeed, let $v\in M_K\setminus S_0'$. Then for each $i\in\{1,\dots,r\}$ we have $\hhat_v(\gamma_i)=0$. Moreover, for each $i\in\{1,\dots,r\}$ and for each $Q_i\in\Fq[t]$, we have 
\begin{equation}
\label{Q_i times 0}
\hhat_v(\phi_{Q_i}(\gamma_i))=\deg(\phi_{Q_i})\cdot\hhat_v(\gamma_i)=0.
\end{equation}
Using \eqref{Q_i times 0} and the triangle inequality for the local canonical height, we obtain that $$\hhat_v\left(\sum_{i=1}^r\phi_{Q_i}(\gamma_i)\right)=0.$$
This shows that indeed $S_0=S_0'$, and concludes the proof of Fact~\ref{S_0 is finite}. 
\end{proof}

If the sequence $(n_k)_{k\ge 1}\subset\mathbb{N}^{*}$ has the property
that for some $j\in\{1,\dots,r\}$, we have
\begin{equation}
\label{elimination}
\lim_{k\to\infty}\frac{q^{d\deg P_{n_k,j}}} {\sum_{i=1}^r q^{d\deg
    P_{n_k,i}}} = 0,
\end{equation}
then the inductive hypothesis will yield the desired conclusion.
Indeed, by the induction hypothesis, and also using
\eqref{elimination}, there exists $v\in S_0$ such that
\begin{equation}
\label{induction-positive}
\limsup_{k\to\infty}\frac{\log |\sum_{i\ne
    j}\phi_{P_{n_k,i}}(\gamma_i)|_v}{\sum_{i=1}^r q^{d\deg P_{n_k,i}}}
> 0.
\end{equation}
If $\hhat_v(\gamma_j)=0$, then
$\left|\phi_{P_{n_k,j}}(\gamma_j)\right|_v$ is bounded as
$k\to\infty$. Thus, for large enough $k$,
$$\left|\sum_{i=1}^r\phi_{P_{n_k,i}}(\gamma_i)\right|_v =
\left|\sum_{i\ne j}\phi_{P_{n_k,i}}(\gamma_i)\right|_v$$
and so,
\eqref{induction-positive} shows that \eqref{positive-2} holds.

Now, if $\hhat_v(\gamma_j)>0$, then we proved in Lemma $4.4$ of
\cite{findrin} that
\begin{equation}
\label{large degree}
\log |\phi_P(\gamma_j)|_v - q^{d\deg P}\hhat_v(\gamma_j)
\end{equation}
is uniformly bounded as $\deg P\to\infty$ (note that this follows
easily from simple arguments involving geometric series and
coefficients of polynomials). Therefore, using \eqref{elimination}, we
obtain
\begin{equation}
\label{induction-0}
\lim_{k\to\infty}\frac{\log \left|\phi_{P_{n_k,j}}(\gamma_j)\right|_v}{\sum_{i=1}^r q^{d\deg P_{n_k,i}}}=0.
\end{equation}
Using \eqref{induction-positive} and \eqref{induction-0}, we conclude that for large enough $k$, 
$$\left|\sum_{i=1}^r\phi_{P_{n_k,i}}(\gamma_i)\right|_v = \left|\sum_{i\ne j} \phi_{P_{n_k,i}}(\gamma_i)\right|_v$$
and so,
\begin{equation}
\label{squeeze}
\limsup_{k\to\infty}\frac{\log \left|\sum_{i=1}^r\phi_{P_{n_k,i}}(\gamma_i)\right|_v}{\sum_{i=1}^r q^{d\deg P_{n_k,i}}}>0,
\end{equation}
as desired. Therefore, we may assume from now on that there exists
$B\ge 1$ such that for every $n$,
\begin{equation}
\label{squeezed}
\frac{\max_{1\le i\le r} q^{d\deg P_{n,i}}}{\min_{1\le i\le r} q^{d\deg P_{n,i}}}\le B\text{ or equivalently,}
\end{equation}
\begin{equation}
\label{squeezed-2}
\max_{1\le i\le r} \deg P_{n,i} - \min_{1\le i\le r} \deg P_{n,i} \le \frac{\log_q B}{d}.
\end{equation}

We will prove that \eqref{positive-2} holds for some place $v$ by doing an analysis at each place $v\in S_0$. We know that $|S_0|\ge 1$ because all $\gamma_i$ are nontorsion.

Our strategy is to show that in case \eqref{positive-2} does not hold, then we can find $\delta_1,\dots,\delta_r\in\Gamma$, and we can find a sequence $(n_k)_{k\ge 1}\subset\mathbb{N}^{*}$, and a sequence of polynomials $\left(R_{k,i}\right)_{\substack{k\in\mathbb{N}^{*}\\ 1\le i\le r}}$ such that 
\begin{equation}
\label{condition-1}
\sum_{i=1}^r \phi_{P_{n_k,i}}(\gamma_i) = \sum_{i=1}^r \phi_{R_{k,i}}(\delta_i)\text{ and}
\end{equation}
\begin{equation}
\label{condition-2}
\sum_{i=1}^r \hhat_v(\delta_i) < \sum_{i=1}^r \hhat_v(\gamma_i)\text{ and}
\end{equation}
\begin{equation}
\label{condition-squeezed}
0 < \liminf_{k\to\infty} \frac{\sum_{i=1}^r q^{d\deg P_{n_k,i}}}{\sum_{i=1}^r q^{d\deg R_{k,i}}}\le\limsup_{k\to\infty}\frac{\sum_{i=1}^r q^{d\deg P_{n_k,i}}}{\sum_{i=1}^r q^{d\deg R_{k,i}}}<+\infty.
\end{equation}
Equation \eqref{condition-1} will enable us to replace the $\gamma_i$ by the $\delta_i$ and proceed with our analysis of the latter. Inequality \eqref{condition-2} combined with Corollary~\ref{uniform-D} and Lemma~\ref{jumps at an infinite place} will show that for each such $v$, in a finite number of steps we either construct a sequence $\delta_i$ as above for which all $\hhat_v(\delta_i)=0$, \emph{or} \eqref{positive-2} holds for $\delta_1,\dots,\delta_r$ and the corresponding polynomials $R_{k,i}$, i.e.
\begin{equation}
\label{positive-2-delta}
\limsup_{k\to\infty} \frac{ \log |\sum_{i=1}^r \phi_{R_{k,i}}(\delta_i) |_v}{\sum_{i=1}^r q^{d\deg R_{k,i}}} > 0.
\end{equation}
Equation \eqref{condition-squeezed} shows that \eqref{positive-2} is equivalent to \eqref{positive-2-delta} (see also \eqref{condition-1}).

We start with $v\in S_0\setminus\{\infty\}$. As
proved in Lemma $4.4$ of \cite{findrin}, for each $i\in\{1,\dots,r\}$
such that $\hhat_v(\gamma_i)>0$, there exists a positive integer $d_i$
such that for every polynomial $Q_i$ of degree at least $d_i$, we have
\begin{equation}
\label{formula}
\log |\phi_{Q_i}(\gamma_i)|_v = q^{d\deg Q_i}\hhat_v(\gamma_i) -
\frac{\log |a_d|_v} {q^d-1}.
\end{equation}
We know that for each $i$, we have $\lim_{n\to\infty} \deg
P_{n,i}=+\infty$ because of \eqref{squeezed-2}. Hence, for each $n$
sufficiently large, and for each $i\in\{1,\dots,r\}$ such that
$\hhat_v(\gamma_i)>0$, we have
\begin{equation}
\label{formula-2}
\log |\phi_{P_{n,i}}(\gamma_i)|_v=q^{d\deg P_{n,i}}\hhat_v(\gamma_i) -
\frac{\log |a_d|_v}{q^d-1}.
\end{equation}

We now split the problem into two cases.

\noindent {\bf {\emph{Case 1.}}} There exists an infinite subsequence
$(n_k)_{k\ge 1}$ such that for every $k$, we have
\begin{equation}
\label{norm of the sum is exact}
\left|\sum_{i=1}^r \phi_{P_{n_k,i}}(\gamma_i)\right|_v=\max_{1\le i\le r} \left|\phi_{P_{n_k,i}}(\gamma_i)\right|_v.
\end{equation}
For the sake of not clustering the notation, we drop the index $k$ from \eqref{norm of the sum is exact} (note that we need to prove \eqref{positive-2} only for a \emph{subsequence}). At the expense of replacing again $\mathbb{N}^{*}$ by a subsequence, we may also assume that for some \emph{fixed} $j\in\{1,\dots,r\}$, we have
\begin{equation}
\label{norm of the sum is exact for j}
\left|\sum_{i=1}^r \phi_{P_{n,i}}(\gamma_i)\right|_v=\max_{i=1}^r \left|\phi_{P_{n,i}}(\gamma_i)\right|= \left|\phi_{P_{n,j}}(\gamma_j)\right|_v,
\end{equation}
for all $n\in\mathbb{N}^{*}$. Because we know that there exists
$i\in\{1,\dots,r\}$ such that $\hhat_v(\gamma_i)>0$, then for such
$i$, we know $|\phi_{P_{n,i}}(\gamma_i)|_v$ is unbounded (as
$n\to\infty$). Therefore, using \eqref{norm of the sum is exact for
  j}, we conclude that also $|\phi_{P_{n,j}}(\gamma_j)|_v$ is
unbounded (as $n\to\infty$). In particular, this means that
$\hhat_v(\gamma_j)>0$.

Then using \eqref{formula-2} for $\gamma_j$, we obtain that
\begin{equation}
\begin{split}
  & \limsup_{n\to\infty} \frac{\log
    |\sum_{i=1}^r\phi_{P_{n,i}}(\gamma_i)|_v}{\sum_{i=1}^r q^{d\deg
      P_{n,i}}} \\
  & = \limsup_{n\to\infty} \frac{\log
    |\phi_{P_{n,j}}(\gamma_j)|_v}{\sum_{i=1}^r q^{d\deg P_{n,i}}}\\
  & = \limsup_{n\to\infty}\frac{q^{d\deg P_{n,j}} \hhat_v(\gamma_j) -
    \frac{\log |a_d|_v}{q^d-1}} {\sum_{i=1}^r q^{d\deg P_{n,i}}}\\
  & = \lim_{n\to\infty} \frac{q^{d\deg P_{n,j}} \hhat_v(\gamma_j) -
    \frac{\log |a_d|_v} {q^d-1}}{q^{d\deg P_{n,j}}} \cdot \limsup_{n\to\infty}\frac{q^{d\deg P_{n,j}}}{\sum_{i=1}^r q^{d\deg P_{n,i}}}\\
  & > 0,
\end{split}
\end{equation}
since
$$\lim_{n\to\infty}\frac{q^{d\deg P_{n,j}}\hhat_v(\gamma_j)-\frac{\log
    |a_d|_v}{q^d-1}}{q^{d\deg P_{n,j}}} = \hhat_v(\gamma_j) > 0 \text{ and}$$
$$\limsup_{n\to\infty}\frac{q^{d\deg P_{n,j}}}{\sum_{i=1}^r q^{d\deg
    P_{n,i}}}>0 \quad \text{because of \eqref{squeezed-2}.}$$

\noindent {\bf {\emph{Case 2.}}} For all but finitely many $n$, we
have
\begin{equation}
\label{norm of the sum is not exact}
\left|\sum_{i=1}^r \phi_{P_{n,i}}(\gamma_i)\right|_v < \max_{1\le i\le r} \left|\phi_{P_{n,i}}(\gamma_i)\right|_v.
\end{equation}
Using the pigeonhole principle, there exists an infinite sequence
$(n_k)_{k\ge 1}\subset\mathbb{N}^{*}$, and there exist $j_1,\dots,
j_s\in\{1,\dots,r\}$ (where $s\ge 2$) such that for each $k$, we have
  \begin{equation}
\label{all the j's}
|\phi_{P_{n_k,j_1}}(\gamma_{j_1})|_v =\dots  =
|\phi_{P_{n_k,j_s}}(\gamma_{j_s})|_v > \max_{i\in\{1,\dots,r\}\setminus\{j_1,\dots,j_s\}} |\phi_{P_{n_k,i}}(\gamma_i)|_v.
\end{equation}
Again, as we did before, we drop the index $k$ from the above subsequence of $\mathbb{N}^{*}$. Using \eqref{all the j's} and the fact that there exists $i\in\{1,\dots,r\}$ such that $\hhat_v(\gamma_i)>0$, we conclude that for all $1\le i\le s$, we have
$\hhat_v(\gamma_{j_i})>0$. Hence, using \eqref{formula-2} in
\eqref{all the j's}, we obtain that for sufficiently large $n$, we
have
\begin{equation}
\label{relations}
q^{d\deg P_{n,j_1}}\hhat_v(\gamma_{j_1})=\dots =q^{d\deg P_{n,j_s}}\hhat_v(\gamma_{j_s}).
\end{equation}
Without loss of generality, we may assume
$\hhat_v(\gamma_{j_1})\ge\hhat_v(\gamma_{j_i})$ for all
$i\in\{2,\dots,s\}$. Then \eqref{relations} yields that $\deg
P_{n,j_i}\ge\deg P_{n,j_1}$ for $i>1$. We divide (with quotient and
remainder) each $P_{n,j_i}$ (for $i>1$) by $P_{n,j_1}$ and for each
such $j_i$, we obtain
\begin{equation}
\label{remainder-quotient}
P_{n,j_i}=P_{n,j_1}\cdot C_{n,j_i} + R_{n,j_i},
\end{equation}
where $\deg R_{n,j_i} <\deg P_{n,j_1}\le \deg P_{n,j_i}$. Using \eqref{squeezed-2}, we conclude that $\deg C_{n,j_i}$ is uniformly bounded as $n\to\infty$. This means that, at the expense of passing to another subsequence of $\mathbb{N}^{*}$, we may assume that there exist polynomials $C_{j_i}$ such that
$$C_{n,j_i}=C_{j_i}\text{ for all $n$}.$$
We let $R_{n,i}:=P_{n,i}$ for each $n$ and for each $i\in\{1,\dots,r\}\setminus\{j_2,\dots,j_s\}$.

Let $\delta_i$ for $i\in\{1,\dots,r\}$ be defined as follows:
$$\delta_i:=\gamma_i\text{ if $i\ne j_1$; and}$$
$$\delta_{j_1}:=\gamma_{j_1}+\sum_{i=2}^s\phi_{C_{j_i}}(\gamma_{j_i}).$$
Then for each $n$, using \eqref{remainder-quotient} and the definition of the $\delta_i$ and $R_{n,i}$, we obtain
\begin{equation}
\label{the same}
\sum_{i=1}^r \phi_{P_{n,i}}(\gamma_i)=\sum_{i=1}^r \phi_{R_{n,i}}(\delta_i).
\end{equation}
Using \eqref{squeezed-2} and the definition of the $R_{n,i}$ (in particular, the fact that $R_{n,j_1}=P_{n,j_1}$ and $\deg R_{n,j_i}<\deg P_{n,j_1}$ for $2\le i\le s$), it is immediate to see that
\begin{equation}
\label{degrees}
0 < \liminf_{n\to\infty} \frac{\sum_{i=1}^r q^{d\deg P_{n,i}}}{\sum_{i=1}^r q^{d\deg R_{n,i}}}\le\limsup_{n\to\infty}\frac{\sum_{i=1}^r q^{d\deg P_{n,i}}}{\sum_{i=1}^r q^{d\deg R_{n,i}}}<+\infty.
\end{equation}
Moreover, because of \eqref{the same} and \eqref{degrees}, we get that
\begin{equation}
\label{positive-4}
\limsup_{n\to\infty}\frac{\log |\sum_{i=1}^r \phi_{P_{n,i}}(\gamma_i)|_v}{\sum_{i=1}^r q^{d\deg P_{n,i}}}>0
\end{equation}
if and only if
\begin{equation}
\label{positive-5}
\limsup_{n\to\infty}\frac{\log |\sum_{i=1}^r \phi_{R_{n,i}}(\delta_i)|_v}{\sum_{i=1}^r q^{d\deg R_{n,i}}}>0.
\end{equation}
We claim that if $\hhat_v(\delta_{j_1})\ge \hhat_v(\gamma_{j_1})$,
then \eqref{positive-5} holds (and so, also \eqref{positive-4} holds).
Indeed, in that case, for large enough $n$, we have
\begin{equation}
\label{large norm}
\begin{split}
  \log |\phi_{R_{n,j_1}}(\delta_{j_1})|_v  & = q^{d\deg R_{n,j_1}}\hhat_v(\delta_{j_1})-\frac{\log |a_d|_v}{q^d-1}\\
  & \ge q^{d\deg P_{n,j_1}}\hhat_v(\gamma_{j_1})-\frac{\log
    |a_d|_v}{q^d-1}\\
& =\log |\phi_{P_{n,j_1}}(\gamma_{j_1})|_v\\
&  >\max_{i=2}^s \log |\phi_{R_{n,j_i}}(\gamma_{j_i})|_v,
\end{split}
\end{equation}
where in the last inequality from \eqref{large norm} we used \eqref{relations} and \eqref{formula-2}, and that for each $i\in\{2,\dots,s\}$ we have $\deg R_{n,j_i}<\deg P_{n,j_i}$. 
Moreover, using \eqref{large norm} and \eqref{all the j's}, together with the definition of the $R_{n,i}$ and the $\delta_i$, we conclude that for large enough $n$, we have
\begin{equation}
\label{large norm 2}
\begin{split}
  \log \left|\sum_{i=1}^r \phi_{R_{n,i}}(\delta_i)\right|_v & = \log \left|\phi_{R_{n,j_1}}(\delta_{j_1})\right|_v\\
  & = q^{d\deg P_{n,j_1}}\hhat_v(\gamma_{j_1})-\frac{\log
    |a_d|_v}{q^d-1}.
\end{split}
\end{equation}
Because $R_{n,j_1}=P_{n,j_1}$, equations \eqref{squeezed-2} and \eqref{degrees} show that
\begin{equation}
\label{degrees-23}
\limsup_{n\to\infty}\frac{q^{d\deg R_{n,j_1}}}{\sum_{i=1}^r q^{d\deg R_{n,i}}} > 0.
\end{equation}
Equations \eqref{large norm 2} and \eqref{degrees-23} show that we are now in {\bf \emph{Case 1}} for the sequence $(R_{n,i})_{\substack{n\in\mathbb{N}^{*}\\1\le i\le r}}$. Hence 
\begin{equation}
   \limsup_{n\to\infty}\frac{\log |\sum_{i=1}^r\phi_{R_{n,i}}(\delta_i)|_v}{\sum_{i=1}^r q^{d\deg R_{n,i}}}>0,
\end{equation}
as desired.

Assume from now on that $\hhat_v(\delta_{j_1})<\hhat_v(\gamma_{j_1})$.
Because $v\in M_K\setminus\{\infty\}$, using Corollary~\ref{uniform-D}
and also using that if $i\ne j_1$, then $\delta_i=\gamma_i$, we
conclude
$$\sum_{i=1}^r\hhat_v(\gamma_i)-\sum_{i=1}^r\hhat_v(\delta_i)\ge\frac{1}{D}.$$
Our goal is to prove \eqref{positive-4} by proving \eqref{positive-5}.
Because we replace some of the polynomials $P_{n,i}$ with other
polynomials $R_{n,i}$, it may very well be that \eqref{squeezed-2} is
no longer satisfied for the polynomials $R_{n,i}$.  Note that in this
case, using induction and arguing as in equations \eqref{elimination}
through \eqref{squeeze}, we see that
$$\limsup_{n\to\infty}\frac{\log
  |\sum_{j=1}^r\phi_{R_{n,j}}(\delta_j)|_w}{\sum_{j=1}^r q^{d\deg
    R_{n,j}}}>0,$$
for some place $w$. This would yield that (see \eqref{the same} and \eqref{degrees})
$$\limsup_{n\to\infty}\frac{\log
  |\sum_{j=1}^r\phi_{P_{n,j}}(\gamma_j)|_w}{\sum_{j=1}^r q^{d\deg
    P_{n,j}}}>0,$$
as desired.
Hence, we may assume again that \eqref{squeezed-2} holds.

We continue the above analysis this time with the $\gamma_i$ replaced
by $\delta_i$. Either we prove \eqref{positive-5} (and so, implicitly,
\eqref{positive-4}), or we replace the $\delta_i$ by other elements in
$\Gamma$, say $\beta_i$ and we decrease even further the sum of their
local heights at $v$:
$$\sum_{i=1}^r
\hhat_v(\delta_i)-\sum_{i=1}^r\hhat_v(\beta_i)\ge\frac{1}{D}.$$
The
above process cannot go on infinitely often because the sum of the
local heights $\sum_{i=1}^r \hhat_v(\gamma_i)$ is decreased each time
by at least $\frac{1}{D}$. Our process ends when we cannot replace
anymore the eventual $\zeta_i$ by new $\beta_i$. Thus, at the final
step, we have $\zeta_1,\dots,\zeta_r$ for which we cannot further
decrease their sum of local canonical heights at $v$. This happens
either because all $\zeta_i$ have local canonical height equal to $0$,
or because we already found a sequence of polynomials $T_{n,i}$ for
which
\begin{equation}
\label{still part of the sequence}
\limsup_{n\to\infty}\frac{\log |\sum_{i=1}^r\phi_{T_{n,i}}(\zeta_i)|_v}{\sum_{i=1}^r q^{d\deg T_{n,i}}}>0.
\end{equation}
Since
\begin{equation}
\label{the same 2}
\sum_{i=1}^r \phi_{P_{n,i}}(\gamma_i)=\sum_{i=1}^r \phi_{T_{n,i}}(\zeta_i),
\end{equation}
this would imply that \eqref{positive-2} holds, which would complete
the proof. Hence, we may assume that we have found a sequence $(\zeta_i)_{1\le i\le r}$ with
canonical local heights equal to 0. As before, we let the $(T_{n,i})_{\substack{n\in\mathbb{N}^{*}\\ 1\le i\le r}}$ be the corresponding sequence of polynomials for the $\zeta_i$, which replace the polynomials $P_{n,i}$.

Next we apply
the above process to another $w\in S_0\setminus\{\infty\}$ for which
there exists at least one $\zeta_i$ such that $\hhat_w(\zeta_i)>0$.
Note that when we apply the above process to the
$\zeta_1,\dots,\zeta_r$ at the place $w$, we might replace (at certain
steps of our process) the $\zeta_i$ by
\begin{equation}
\label{the form}
\sum_{j}\phi_{C_j}(\zeta_j)\in\Gamma.
\end{equation}
Because for the places $v\in S_0$ for which we already completed the above process, $\hhat_v(\zeta_i)=0$ for all $i$, then by the triangle inequality for the local height, we also
have
$$\hhat_v\left(\sum_j \phi_{C_j}(\zeta_j)\right)=0.$$

If we went through all $v\in S_0\setminus\{\infty\}$, and if the above
process did not yield that \eqref{positive-4} holds for some $v\in
S\setminus\{\infty\}$, then we are left with
$\zeta_1,\dots,\zeta_r\in\Gamma$ such that for all $i$ and all
$v\ne\infty$, we have $\hhat_v(\zeta_i)=0$.  Note that since
$\hhat_v(\zeta_i)=0$ for each $v\ne\infty$ and each
$i\in\{1,\dots,r\}$, then by the triangle inequality for local
heights, for all polynomials $Q_1,\dots,Q_r$, we have
\begin{equation}
\label{triangle inequality}
\hhat_v\left(\sum_{i=1}^r \phi_{Q_i}(\zeta_i)\right)=0\text{ for $v\ne\infty$.}
\end{equation}
Lemma~\ref{jumps at an infinite place} and \eqref{triangle inequality}
show that for all polynomials $Q_i$,
\begin{equation}
\label{the final jump}
D_{\infty}\cdot\hhat_{\infty}\left(\sum_{i=1}^r\phi_{Q_i}(\zeta_i)\right)\in\mathbb{N}.
\end{equation}

We repeat the above process, this time for $v=\infty$. As before, we
conclude that either\begin{equation}
\label{positive-infinity}
\limsup_{n\to\infty}\frac{\log |\sum_{i=1}^r\phi_{T_{n,i}}(\zeta_i)|_{\infty}}{\sum_{i=1}^r q^{d\deg T_{n,i}}}>0
\end{equation}
or we are able to replace the $\zeta_i$ by some other elements $\beta_i$ (which are of the form \eqref{the form}) such that
$$\sum_{i=1}^r\hhat_{\infty}(\beta_i) < \sum_{i=1}^r\hhat_{\infty}(\zeta_i).$$
Using \eqref{the final jump}, we conclude that
\begin{equation}
\label{the final jump 2}
\sum_{i=1}^r\hhat_{\infty}(\zeta_i) - \sum_{i=1}^r\hhat_{\infty}(\beta_i)\ge\frac{1}{D_{\infty}}.
\end{equation}
Therefore, after a finite number of steps this process of replacing
the $\zeta_i$ must end, and it cannot end with all the new $\beta_i$
having local canonical height $0$, because this would mean that all
$\beta_i$ are torsion (we already knew that for $v\ne\infty$, we have
$\hhat_v(\zeta_i)=0$, and so, by \eqref{triangle inequality},
$\hhat_v(\beta_i)=0$). Because the $\beta_i$ are nontrivial ``linear''
combinations (in the $\phi$-module $\Gamma$) of the $\gamma_i$ which
span a torsion-free $\phi$-module, we conclude that indeed, the
$\beta_i$ cannot be torsion points. Hence, our process ends with
proving \eqref{positive-infinity} which proves \eqref{positive-4}, and
so, it concludes the proof of our Proposition~\ref{sup}.
\end{proof}

\begin{remark}
  If there is more than one infinite place in $K$, then we cannot
  derive Lemma~\ref{jumps at an infinite place}, and in particular, we
  cannot derive \eqref{the final jump 2}. The idea is that in this
  case, for each nontorsion $\zeta$ which has its local canonical
  height equal to $0$ at finite places, we only know that there exists \emph{some}
  infinite place where its local canonical height has \emph{bounded} denominator. However,
  we do not know if that place is the one which we analyze at that
  particular moment in our process from the proof of
  Proposition~\ref{sup}. Hence, we would not necessarily be able to
  derive \eqref{the final jump 2}.
\end{remark}



Now we are ready to prove Theorem~\ref{Siegel}.
\begin{proof}[Proof of Theorem~\ref{Siegel}.]
  Let $(\gamma_i)_i$ be a finite set of generators of $\Gamma$ as a
  module over $A=\Fq[t]$. At the expense of replacing $S$ with a
  larger finite set of places of $K$, we may assume $S$ contains all the
  places $v \in M_K$ which satisfy at least one of the following
  properties:
\begin{enumerate}
\item[$1.$] $\hhat_v(\gamma_i) > 0$ for some $1\le i\le r$.
\item[$2.$] $|\gamma_i|_v > 1$ for some $1\le i\le r$.
\item[$3.$] $|\alpha|_v > 1$.
\item[$4.$] $\phi$ has bad reduction at $v$.
\end{enumerate}
Expanding the set $S$ leads only to (possible) extension of the set of
$S$-integral points in $\Gamma$ with respect to $\alpha$. Clearly, for
every $\gamma\in \Gamma$, and for every $v\notin S$ we have $|\gamma|_v\le 1$. 
Therefore, using $3.$, we obtain
\begin{equation}
\label{restating S-integrality}
\begin{split}
  \gamma\in\Gamma\text{ is $S$-integral with respect to $\alpha$} 
  \Longleftrightarrow |\gamma-\alpha|_v=1\text{ for all $v\in
    M_K\setminus S$}.
\end{split}
\end{equation}

Moreover, using $1.$ from above, we conclude that for every $\gamma\in\Gamma$, and for
every $v\notin S$, we have $\hhat_v(\gamma)=0$ (see the proof of Fact~\ref{S_0 is finite}).

Next we observe that it suffices to prove Theorem~\ref{Siegel} under
the assumption that $\Gamma$ is a free $\phi$-submodule. Indeed,
because $A=\Fq[t]$ is a principal ideal domain, $\Gamma$ is a direct
sum of its finite torsion submodule $\Gamma_{\tor}$ and a free
$\phi$-submodule $\Gamma_1$ of rank $r$, say. Therefore,
$$\Gamma = \bigcup_{\gamma\in\Gamma_{\tor}} \gamma+\Gamma_1.$$
If we
show that for every $\gamma_0\in\Gamma_{\tor}$ there are finitely many
$\gamma_1\in\Gamma_1$ such that $\gamma_1$ is $S$-integral with
respect to $\alpha-\gamma_0$, then we obtain the conclusion of
Theorem~\ref{Siegel} for $\Gamma$ and $\alpha$ (see \eqref{restating S-integrality}).

Thus from now on, we assume $\Gamma$ is a free $\phi$-submodule of
rank $r$. Let $\gamma_1,\dots,\gamma_r$ be a basis for $\Gamma$ as an
$\Fq[t]$-module. We reason by contradiction. Let
$$\sum_{i=1}^r \phi_{P_{n,i}}(\gamma_i) \in\Gamma$$
be an infinite sequence of elements $S$-integral with respect to $\alpha$. Because of the $S$-integrality assumption (along with the assumptions
on $S$), we conclude that for every
$v \notin S$, and for every $n$ we have
$$ \frac{ \log
  |\sum_{i=1}^r \phi_{P_{n,i}}(\gamma_i) - \alpha |_v}{\sum_{i=1}^r
  q^{d\deg P_{n,i}}} = 0.$$
Thus, using the product formula, we see that
\begin{equation*}
\begin{split}
 & \limsup_{n\to\infty} \sum_{v\in S} \frac{ \log
  |\sum_{i=1}^r \phi_{P_{n,i}}(\gamma_i) - \alpha |_v}{\sum_{i=1}^r
  q^{d\deg P_{n,i}}} \\
& =  \limsup_{n\to\infty} \sum_{v\in M_K} \frac{ \log
  |\sum_{i=1}^r \phi_{P_{n,i}}(\gamma_i) - \alpha |_v}{\sum_{i=1}^r
  q^{d\deg P_{n,i}}}\\
& = 0.
\end{split}
\end{equation*}

On the other hand, by Proposition \ref{sup}, there is some place $w\in S$
and some number $\delta > 0$ such that
$$
\limsup_{n\to\infty} \frac{ \log |\sum_{i=1}^r
  \phi_{P_{n,i}}(\gamma_i) - \alpha |_w}{\sum_{i=1}^r q^{d\deg
    P_{n,i}}} = \delta > 0.$$
So, using Lemma \ref{inf}, we see that
\begin{equation*}
\begin{split}
& \limsup_{n\to\infty} \sum_{v\in S} \frac{ \log
  |\sum_{i=1}^r \phi_{P_{n,i}}(\gamma_i) - \alpha |_v}{\sum_{i=1}^r
  q^{d\deg P_{n,i}}} \\
& \ge \sum_{\substack{v\in S\\ v\ne w}}\liminf_{n\to\infty}\frac{ \log
  |\sum_{i=1}^r \phi_{P_{n,i}}(\gamma_i) - \alpha |_v}{\sum_{i=1}^r
  q^{d\deg P_{n,i}}} +   \limsup_{n\to\infty} \frac{ \log |\sum_{i=1}^r
  \phi_{P_{n,i}}(\gamma_i) - \alpha |_w}{\sum_{i=1}^r q^{d\deg
    P_{n,i}}} \\ 
& \geq 0 + \delta \\
& > 0.\\
\end{split}
\end{equation*}

Thus, we have a contradiction which shows that there cannot be infinitely many
elements of $\Gamma$ which are $S$-integral for $\alpha$. 
\end{proof}


\begin{thebibliography}{Den92b}

\bibitem[Bak75]{baker}
A.~Baker, \emph{Transcendental number theory}, Cambridge University Press,
  Cambridge, 1975.

\bibitem[BIR05]{BIR}
M.~Baker, S.~I. Ih, and R.~Rumely, \emph{A finiteness property of torsion
  points}, 2005, preprint, Available at {\tt arxiv:math.NT/0509485}, 30 pages.

\bibitem[Bos99]{Bosser}
V.~Bosser, \emph{Minorations de formes lin\'eaires de logarithmes pour les
  modules de {D}rinfeld}, J. Number Theory \textbf{75} (1999), no.~2, 279--323.

\bibitem[Bre05]{Breuer}
F.~Breuer, \emph{The {A}ndr\'{e}-{O}ort conjecture for products of {D}rinfeld
  modular curves}, J. reine angew. Math. \textbf{579} (2005), 115--144.

\bibitem[Dav95]{david}
S.~David, \emph{Minorations de formes lin\'{e}aires de logarithmes
  elliptiques}, Mem. Soc. Math. France \textbf{62} (1995), 143 pp.

\bibitem[Den92a]{Denis-conjectures}
L.~Denis, \emph{G\'eom\'etrie diophantienne sur les modules de {D}rinfel\cprime
  d}, The arithmetic of function fields (Columbus, OH, 1991), Ohio State Univ.
  Math. Res. Inst. Publ., vol.~2, de Gruyter, Berlin, 1992, pp.~285--302.

\bibitem[Den92b]{Denis}
\bysame, \emph{Hauteurs canoniques et modules de {D}rinfel\cprime d}, Math.
  Ann. \textbf{294} (1992), no.~2, 213--223.

\bibitem[EY03]{Edixhoven}
B.~Edixhoven and A.~Yafaev, \emph{Subvarieties of {S}himura type}, Ann. of
  Math. (2) \textbf{157} (2003), no.~2, 621--645.

\bibitem[Ghi05]{IMRN}
D.~Ghioca, \emph{The {M}ordell-{L}ang theorem for {D}rinfeld modules}, Int.
  Math. Res. Not. (2005), no.~53, 3273--3307.

\bibitem[Ghi06a]{Mat.Ann}
D.~Ghioca, \emph{Equidistribution for torsion points of a {D}rinfeld module},
  Math. Ann. \textbf{336} (2006), no.~4, 841--865.

\bibitem[Ghi06b]{full-ml-drinfeld}
D.~Ghioca, \emph{Towards the full {M}ordell-{L}ang conjecture for {D}rinfeld
  modules}, submitted for publication, 6 pages, 2006.

\bibitem[Ghi07a]{mw2}
D.~Ghioca, \emph{The {L}ehmer inequality and the {M}ordell-{W}eil theorem for
  {D}rinfeld modules}, J. Number Theory \textbf{122} (2007), no.~1, 37--68.

\bibitem[Ghi07b]{locleh}
\bysame, \emph{The local {L}ehmer inequality for {D}rinfeld modules}, J. Number
  Theory \textbf{123} (2007), no.~2, 426--455.

\bibitem[Gos96]{Goss}
D.~Goss, \emph{Basic structures of function field arithmetic}, Ergebnisse der
  Mathematik und ihrer Grenzgebiete (3) [Results in Mathematics and Related
  Areas (3)], vol.~35, Springer-Verlag, Berlin, 1996.

\bibitem[GT06]{findrin}
D.~Ghioca and T.~J. Tucker, \emph{Equidistribution and integrality for
  {D}rinfeld modules}, submitted for publication, 29 pages, 2006.

\bibitem[GT07]{dynml}
\bysame, \emph{A dynamical version of the {M}ordell-{L}ang conjecture},
  submitted for publication, 14 pages, 2007.

\bibitem[Poo95]{Poonen}
B.~Poonen, \emph{Local height functions and the {M}ordell-{W}eil theorem for
  {D}rinfel\cprime d modules}, Compositio Math. \textbf{97} (1995), no.~3,
  349--368.

\bibitem[Sca02]{Scanlon}
T.~Scanlon, \emph{Diophantine geometry of the torsion of a {D}rinfeld module},
  J. Number Theory \textbf{97} (2002), no.~1, 10--25.

\bibitem[Ser97]{Serre-Mordell_Weil}
J.-P. Serre, \emph{Lectures on the {M}ordell-{W}eil theorem}, third ed.,
  Aspects of Mathematics, Friedr. Vieweg \& Sohn, Braunschweig, 1997,
  Translated from the French and edited by Martin Brown from notes by Michel
  Waldschmidt, With a foreword by Brown and Serre.

\bibitem[Sie29]{Siegel}
C.~L. Siegel, \emph{{\"{U}}ber einige anwendungen diophantisher
  approximationen}, Abh. Preuss. Akad. Wiss. Phys. Math. Kl. (1929), 41--69.

\bibitem[Sil93]{SilSiegel}
J.~H. Silverman, \emph{Integer points, {D}iophantine approximation, and
  iteration of rational maps}, Duke Math. J. \textbf{71} (1993), no.~3,
  793--829.

\bibitem[SUZ97]{suz}
L.~Szpiro, E.~Ullmo, and S.~Zhang, \emph{Equir\'epartition des petits points},
  Invent.~Math. \textbf{127} (1997), 337--347.

\bibitem[Tag93]{Taguchi}
Y.~Taguchi, \emph{Semi-simplicity of the {G}alois representations attached to
  {D}rinfel\cprime d modules over fields of ``infinite characteristics''}, J.
  Number Theory \textbf{44} (1993), no.~3, 292--314.

\bibitem[Yaf06]{Yafaev}
A.~Yafaev, \emph{A conjecture of {Y}ves {A}ndr\'{e}'s}, Duke Math. J.
  \textbf{132} (2006), no.~3, 393--407.

\bibitem[Zha98]{Zhang}
S.~Zhang, \emph{Equidistribution of small points on abelian varieties}, Ann. of
  Math. (2) \textbf{147} (1998), no.~1, 159--165.

\end{thebibliography}

\def\cprime{$'$} \def\cprime{$'$} \def\cprime{$'$} \def\cprime{$'$}
\providecommand{\bysame}{\leavevmode\hbox to3em{\hrulefill}\thinspace}
\providecommand{\MR}{\relax\ifhmode\unskip\space\fi MR }
\providecommand{\MRhref}[2]{%
  \href{http://www.ams.org/mathscinet-getitem?mr=#1}{#2}
}
\providecommand{\href}[2]{#2}

\end{document}